\theoremstyle{plain}
\newtheorem{claim}[thm]{Claim}
\newcommand\blfootnote[1]{%
  \begingroup
  \renewcommand\thefootnote{}\footnote{#1}%
  \addtocounter{footnote}{-1}%
  \endgroup
}
\title{Non-uniformly local tent spaces}
\author[1]{Alex Amenta\thanks{amenta@fastmail.fm}}
\author[2]{Mikko Kemppainen\thanks{mikko.k.kemppainen@helsinki.fi}}
\affil[1]{Mathematical Sciences Institute, Australian National University}
\affil[2]{Departamento de Matem\'aticas, Universidad Aut\'onoma de Madrid}
\date{}
\begin{document}

\maketitle

\blfootnote{2010 \emph{Mathematics Subject Classification}. 42B35 (Primary); 58G30 (Secondary).}

\begin{abstract}
  We develop a theory of `non-uniformly local' tent spaces on metric measure spaces.
  As our main result, we give a remarkably simple proof of the atomic decomposition.
\end{abstract}

{\textbf{Keywords:} Local tent spaces, weighted measures, admissible balls, atoms.}

\section{Introduction}

The theory of global tent spaces on Euclidean space was first considered by Coifman, Meyer, and Stein \cite{CMS85}, and has since become a central framework for understanding Hardy spaces defined by square functions.
Upon replacing Euclidean space with a doubling metric measure space, the theory is largely unchanged.\footnote{Details of this generalisation can be found in \cite{aA13}, although this was known to harmonic analysts for some time.}

Tent spaces on Riemannian manifolds with doubling volume measure were used by Auscher, McIntosh, and Russ in \cite{AMR08}, where a `first order approach' to Hardy spaces associated with the Laplacian $-\Delta$ (or more accurately, the corresponding Hodge--Dirac operator) was investigated.
A corresponding \emph{local} tent space theory, now on manifolds with \emph{exponentially locally doubling} volume measure, was considered by Carbonaro, McIntosh, and Morris \cite{CMM13}, with applications to operators such as $-\Delta + a$ for $a > 0$.
The locality arises from the `spectral gap' between $0$ and $\sigma (-\Delta + a) \subset [a,\infty)$ and means that the relevant information of a function can be captured from small time diffusion,
which in turn allows one to exploit the locally doubling nature of the manifold under investigation.
Hence the related tent spaces consist of functions of space-time variables $(y,t)$ with $0 < t < 1$ instead of $0 < t < \infty$.

The motivation for non-uniformly local tent spaces comes from the setting of Gaussian harmonic analysis, in which one considers the \emph{Ornstein--Uhlenbeck operator} ${L = -\Delta + x\cdot \nabla}$ on $\RR^n$ equipped with the usual Euclidean distance and the Gaussian measure $d\gg (x) = (2\pi)^{-n/2} e^{-|x|^2 / 2} \, dx$.
Here $\sigma (L) = \{ 0,1,2,\ldots \}$, but despite the evident spectral gap, one cannot make use of a uniformly local tent space because the rapidly decaying measure $\gg$ is non-doubling. 
This was remedied by Maas, van Neerven, and Portal \cite{MvNP12}, who defined the `Gaussian tent spaces' $\mf{t}^p(\gg)$ to consist of functions on the region $D = \{ (y,t) \in\RR^n \times (0,\infty) : t < m(y) \}$.
Here $m(y) = \min (1, |y|^{-1})$ is the \emph{admissibility function} of Mauceri and Meda \cite{MM07}, who showed that $\gg$ is doubling on the family of `admissible balls' $B(x,t)$ with $t\leq m(x)$.
In \cite{pP13}, Portal then defined the `Gaussian Hardy space' $\mf{h}^1(\gg)$ using the conical square function
\begin{equation*}
  Su(x) = \left( \int_0^{2m(x)} \fint_{B(x,t)} |t\nabla e^{-t^2L}u(y)|^2 \,d\gg (y) 
    \frac{dt}{t} \right)^{1/2} ,
\end{equation*}
and showed that the Riesz transform $\nabla L^{-1/2}$ is bounded from $\mf{h}^1(\gg)$ to $L^1(\gg)$.
This relied on the atomic decomposition on $\mf{t}^1(\gg)$, which was established in \cite{MvNP12}, along with a square function estimate from \cite{MvNP11}.
The Gaussian Hardy space is also known to interpolate with $L^2(\gg)$, in the sense that $[\mf{h}^1(\gg) , L^2(\gg)]_\theta = L^p(\gg)$ for $1/p = 1 - \theta / 2$ \cite{PPpc}.\footnote{Note that dimension-independent boundedness of $\nabla L^{-1/2}$ on $L^p(\gg)$ for $1 < p < \infty$ is a classical result of Meyer \cite{pM84}.}

Our long-term aim is to generalise this theory to the setting where, given an appropriate `potential function' $\phi$ on a Riemannian manifold $X$ (or some more general space) with volume measure $\gm$, one considers the \emph{Witten Laplacian} $L = -\gD + \nabla \phi \cdot \nabla$ equipped with the geodesic distance and the measure $d\gg = e^{-\phi}d\gm$.
An admissibility function can then be defined by $m(x) = \min (1,|\nabla\phi (x)|^{-1})$, with a suitable interpretation of $\nabla$ if $\gfv$ is not differentiable, and the setting of Gaussian harmonic analysis is recovered by taking $X = \RR^n$ and $\phi (x) = \frac{n}{2}\log (2\pi) + \frac{|x|^2}{2}$.
The Riesz transform associated with the Witten Laplacian has been studied for instance by Bakry in \cite{dB87}, where $L^p(\gg)$ boundedness for $1 < p < \infty$ is proven under a $\phi$-related curvature assumption.

In this article we define and study the corresponding local tent spaces $\mf{t}^{p,q}(\gg)$. 
Our main result is the atomic decomposition Theorem \ref{atomic}.
This allows us to identify the dual of $\mf{t}^{1,q}(\gg)$ with the local tent space $\mf{t}^{\infty,q^\prime}(\gg)$, and to show that the local tent spaces form a complex interpolation scale. In Appendix \ref{cone} we prove a `cone covering lemma'
for non-negatively curved Riemannian manifolds. It gives a stronger version of Lemma \ref{pointwise2} that is
applicable also in the vector-valued theory of tent spaces (see \cite{mK11,mK14}).

A different approach to Gaussian Hardy spaces was initiated in \cite{MM07},
where the \emph{atomic} Hardy space $H^1(\gg)$ was introduced. This theory has also been 
extended to certain metric measure spaces (see \cite{CMM09,CMM10}). 
While many interesting singular integral operators, such as
imaginary powers of the Ornstein--Uhlenbeck operator, have been shown to act boundedly from $H^1(\gg)$
to $L^1(\gamma)$ (see \cite{MM07}), it should be noted that this is not the case for the Riesz transform (see
\cite{MMS12}). This marks the crucial difference between the atomic Hardy space $H^1(\gg)$ and $\mf{h}^1(\gg)$.

\addtocontents{toc}{\protect\setcounter{tocdepth}{1}}
\subsection*{Acknowledgements}
The first author acknowledges the financial support from the Australian Research Council Discovery grant DP120103692.
The second author acknowledges the financial support from the V\"ais\"al\"a Foundation and from the Academy of Finland through the project \emph{Stochastic and harmonic analysis: interactions and applications} (133264). He is grateful for the hospitality of the Mathematical Sciences Institute at the Australian National University during his stay.
\addtocontents{toc}{\protect\setcounter{tocdepth}{2}}

\section{Weighted measures and admissible balls}\label{basicsetting}

We begin by formulating the abstract framework in which we develop our theory.
Let $(X,d,\gm)$ be a metric measure space: that is, a metric space $(X,d)$ equipped with a Borel measure $\gm$.
We assume that every ball $B\subset X$ comes with a given center $c_B$ and a radius $r_B > 0$, and that 
the volume $\gm(B)$ is finite and nonzero.
Furthermore, we assume that the metric space $(X,d)$ is geometrically doubling: that is, we assume that there exists a natural number $N \geq 1$ such that for every ball $B \subset X$ of radius $r_B$, there exist at most $N$ mutually disjoint balls of radius $r_B/2$ contained in $B$.

Given a measurable real-valued function $\gfv$ on $X$, we consider the weighted measure 
\begin{equation*}
  d\gg (x) := e^{-\gfv (x)} \; d\gm(x).
\end{equation*}
Furthermore, we fix a function $\map{m}{X}{(0,\infty)}$, which we call an \emph{admissibility function}.
For every $\ga > 0$, this defines the family of \emph{admissible balls}
\begin{equation*}
  \mc{B}_\ga := \{ B\subset X : 0 < r_B \leq \ga m(c_B) \} .
\end{equation*}
These objects are required to satisfy the following doubling condition:

\begin{description}
\item[(A)] For every $\ga > 0$, $\gg$ is doubling on $\mc{B}_\ga$, in the sense that there exists a constant $C_\ga \geq 1$ such that for all $\ga$-admissible balls $B \in \mc{B}_\ga$,
  \begin{equation*}
    \gg(2B) \leq C_\ga \gg (B).
  \end{equation*}
\end{description}
Here and in what follows, we write $\gl B = B(c_B , \gl r_B)$ for the expansion of a ball $B$ by $\lambda \geq 1$. 

\begin{rmk}
\label{doublingconstant}
    Condition (A) implies that for every $\ga > 0$ and every $\gl \geq 1$, there exists
    a constant $C_{\ga,\gl} \geq 1$ such that for all $\ga$-admissible balls $B \in \mc{B}_\ga$,
    \begin{equation}\label{alphalambda}
      \gg (\gl B) \leq C_{\ga,\gl} \gg (B) .
    \end{equation}
\end{rmk}

We now describe two classes of examples of $\gfv$ and $m$.

\begin{example}[Distance functions]\label{distfn}
  Assume that the underlying measure $\gm$ is doubling, let $\gO \subset X$ be a measurable set of `origins', and let $a,a^\prime > 0$.
  Define $\gfv$ by
  \begin{equation*}
    \gfv(x) := a + a' \dist (x,\gO)^2.
  \end{equation*} 
  An admissibility function can then be defined by
  \begin{equation*}
    m(x) = \min \left(1, \frac{1}{\dist(x,\gO)} \right) .
  \end{equation*}
  Taking $X$ to be $\RR^n$ (equipped with the usual Euclidean distance and Lebesgue measure), $\gO = \{0\}$, and $(a,a^\prime) = (n\log(2\gp)/2,1/2)$, we recover the setting of Gaussian harmonic analysis.

  \begin{claim}
    Condition (A) is satisfied with $C_\ga = D_\gm e^{a^\prime \ga(5\ga + 6)}$, where $D_\gm$ is the doubling constant of the underlying measure $\gm$.
  \end{claim}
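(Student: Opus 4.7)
The plan is to factor out $e^{-\gfv(c_B)}$ from both integrals defining $\gg(2B)$ and $\gg(B)$, and then estimate the pointwise oscillation of $\gfv$ relative to its value at the center. Write
\begin{equation*}
    \gg(2B) = e^{-\gfv(c_B)} \int_{2B} e^{\gfv(c_B) - \gfv(y)}\, d\gm(y),
    \qquad
    \gg(B) = e^{-\gfv(c_B)} \int_{B} e^{\gfv(c_B) - \gfv(x)}\, d\gm(x),
\end{equation*}
so that the $a$-constant in $\gfv$ cancels and only the $a'\dist(\cdot,\gO)^2$ term matters. Setting $\gd = \dist(c_B,\gO)$, the key observation is that $r_B \leq \ga m(c_B)$ yields both $r_B \leq \ga$ and $r_B \gd \leq \ga$ (checking the cases $\gd \leq 1$ and $\gd > 1$ separately), which will be the two main quantities controlling the error terms.

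For the upper bound on $\gg(2B)$, apply the reverse triangle inequality to obtain $\dist(y,\gO) \geq \max(0, \gd - 2r_B)$ for $y \in 2B$, and thus $\gfv(c_B) - \gfv(y) \leq a'(\gd^2 - \max(0,\gd - 2r_B)^2)$. Splitting on whether $\gd \geq 2r_B$ or $\gd < 2r_B$, this expression is bounded respectively by $4a'r_B\gd \leq 4a'\ga$ or by $a'\gd^2 \leq 4a'r_B^2 \leq 4a'\ga^2$; combining, it is at most $a'\max(4\ga,4\ga^2)$. For the lower bound on $\gg(B)$, the triangle inequality gives $\dist(x,\gO) \leq \gd + r_B$ for $x \in B$, hence
\begin{equation*}
    \gfv(c_B) - \gfv(x) \geq -a'(2 r_B \gd + r_B^2) \geq -a'(2\ga + \ga^2).
\end{equation*}

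Multiplying the resulting pointwise bounds by the volume ratio $\gm(2B)/\gm(B) \leq D_\gm$ (doubling of $\gm$) yields
\begin{equation*}
    \frac{\gg(2B)}{\gg(B)} \leq D_\gm \exp\bigl(a'\max(4\ga,4\ga^2) + a'(2\ga + \ga^2)\bigr).
\end{equation*}
The exponent is $6a'\ga + a'\ga^2$ when $\ga \leq 1$ and $5a'\ga^2 + 2a'\ga$ when $\ga > 1$, and in both regimes it is bounded by $a'\ga(5\ga + 6)$. This gives $C_\ga = D_\gm e^{a'\ga(5\ga+6)}$ as claimed.

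The only real subtlety is the case split $\gd \lessgtr 2r_B$ in the upper bound: the naive triangle-inequality estimate $\dist(y,\gO)^2 - \gd^2 \leq 4 r_B \gd + 4 r_B^2$ gives the looser constant $4a'\ga + 4a'\ga^2$ rather than $\max(4\ga,4\ga^2)$, and yields the weaker bound $a'(5\ga^2 + 6\ga) + 4a'\ga^2$ after combining with the lower estimate. Using instead that $\dist(y,\gO)$ cannot become negative sharpens the $4\ga^2$ term in the large-$\ga$ regime and delivers the stated constant.
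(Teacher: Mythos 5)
Your proof is correct and follows essentially the same approach as the paper: factor out $e^{-\gfv(c_B)}$, control the oscillation of $\gfv$ over $2B$ and $B$ via the triangle inequality together with the admissibility condition $r_B \leq \ga m(c_B)$, and invoke doubling of $\gm$. Your use of $\dist(y,\gO) \geq \max(0, \gd - 2r_B)$ actually yields the slightly sharper exponent $\max(4\ga,4\ga^2)$ in the upper bound (versus the paper's $4\ga(\ga+1)$), which you then relax to match the stated $a'\ga(5\ga+6)$; this is a harmless refinement, not a different argument.
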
 

  \begin{proof}
    Since $\gm$ is doubling, it suffices to show that for every $\ga$-admissible ball $B \in \mc{B}_\ga$ we have 
    \begin{equation}
      \label{localdoubling}
      \begin{cases}
        e^{-\gfv(x)} \leq C_\ga' e^{-\gfv(c_B)}  \quad \text{when $x\in 2B$, and} \\
        e^{-\gfv(x)} \geq C_\ga'' e^{-\gfv(c_B)}  \quad \text{when } x\in B .
      \end{cases}
    \end{equation}	
    Indeed, this would imply that
    \begin{equation*}
      \gg (2B) = \int_{2B} e^{-\gfv (x)} \; d\gm(x) \leq C_\ga' \gm(2B) e^{-\gfv (c_B)}
    \end{equation*}
    and 
    \begin{equation*}
      \gg (B) = \int_B e^{-\gfv (x)} \; d\gm(x) \geq C_\ga'' \gm(B) e^{-\gfv (c_B)},
    \end{equation*}	
    so that
    \begin{equation*}
      \frac{\gg (2B)}{\gg (B)} 
      \leq \frac{C_\ga'}{C_\ga''} \frac{\gm(2B)}{\gm(B)} \leq C_\ga := D_\gm \frac{C_\ga^\prime}{C_\ga^{\prime\prime}}.
    \end{equation*}
    
    To see that the first inequality in \eqref{localdoubling} holds with $C_\ga^\prime = e^{4a^\prime \ga(\ga+1)}$, observe that if $x\in 2B$, then
    \begin{equation*}
      \dist(c_B,\gO) \leq 2\ga m(x) + \dist(x,\gO) .
    \end{equation*}
    Indeed, if $\dist(c_B,\gO) \geq \dist(x,\gO)$, then $m(c_B)\leq m(x)$, and so
    \begin{equation*}
      \dist(c_B,\gO) \leq d(c_B,x) + \dist(x,\gO) \leq 2\ga m(c_B) + \dist(x,\gO) \leq 2\ga m(x) + \dist(x,\gO).
    \end{equation*}
    Consequently we have
    \begin{equation*}
      \dist(c_B,\gO)^2 \leq 4\ga m(x)^2 + 4\ga m(x) \dist(x,\gO) + \dist(x,\gO)^2
      \leq 4\ga^2 + 4\ga + \dist(x,\gO)^2,
    \end{equation*}
    and so
    \begin{equation*}
      e^{-a'\dist(x,\gO)^2} \leq e^{4a'\ga(\ga + 1)} e^{-a'\dist(c_B,\gO)^2}.
    \end{equation*}
    
    Similarly, the second inequality in \eqref{localdoubling} with $C_\ga^{\prime\prime} = e^{-a^\prime \ga(\ga+2)}$ follows after noting that
    if $x\in B$, then
    \begin{equation*}
      \dist(x,\gO) \leq d(x,c_B) + \dist(c_B,\gO) \leq \ga m(c_B) + \dist(c_B,\gO).
    \end{equation*}
    Thus
    \begin{equation*}
      \dist(x,\gO)^2 \leq \ga^2 + 2\ga + \dist(c_B,\gO)^2
    \end{equation*}
    and
    \begin{equation*}
      e^{-a'\dist(x,\gO)^2} \geq e^{-a^\prime \ga(\ga + 2)} e^{-a'\dist(c_B,\gO)^2}.
    \end{equation*}
    Putting these estimates together, we have
    \begin{equation*}
      C_\ga = D_\gm e^{4a^\prime \ga(\ga+1)}e^{a^\prime \ga(\ga+2)} = D_\gm e^{a^\prime \ga(5\ga + 6)}
    \end{equation*}
    as claimed.
  \end{proof}
\end{example}

\begin{example}[$C^2$ potentials]\label{c2pot}
  In this example, let $(X,g)$ be a connected Riemannian manifold ($C^2$ is sufficient) with doubling volume measure, let $\gfv \in C^2(X)$, and assume that the following condition is satisfied:
  \begin{description}
  \item[(B)] there exists a constant $M > 0$ such that for every unit speed geodesic $\map{\gr}{[0,\ell]}{X}$, we have
    \begin{equation}\label{derivativetest}
      |(\gfv \circ \gr)^{\prime\prime}(t)| \leq M|(\gfv \circ \gr)^{\prime}(t)|
    \end{equation}
    for all $t \in (0,\ell)$ such that $|(\gfv \circ \gr)^{\prime}(t)| > 1$.
  \end{description}
  Alternatively, we can assume the following inequivalent condition, which is neater but generally harder to verify:
  \begin{description}
  \item[(H)] there exists a constant $M > 0$ such that
    \begin{equation}\label{hesscondn}
      \norm{\Hess \gfv(x)} \leq M|\nabla \gfv(x)|
    \end{equation}
    for all $x \in X$ such that $|\nabla \gfv(x)| > 1$.
  \end{description}
  Note that (B) can be interpreted as a one-dimensional version of (H); indeed, when $X$ is one-dimensional, both conditions are equivalent.

  If either of the above conditions are satisfied, we define an admissibility function by
  \begin{equation*}
    m(x) := \min\left( 1, \frac{1}{|\nabla \gfv(x)|} \right)
  \end{equation*}
  for $x \in X$, with $m(x) := 1$ when $|\nabla \gfv(x)| = 0$.

  \begin{claim}\label{a12}
    If $d(x,y) \leq \ga$ then $m(x) \leq e^{M\ga} m(y)$.  
  \end{claim}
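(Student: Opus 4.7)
The plan is to parametrise by a unit-speed minimising geodesic $\gr\colon [0,\ell]\to X$ from $y$ to $x$ (with $\ell = d(x,y) \leq \ga$) and derive a differential inequality for $|\nabla\gfv|$ along $\gr$, which on integration will give the required multiplicative control on $m$.

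Writing $f(t) := |\nabla\gfv(\gr(t))|$ and $F(t) := \log\max(f(t),1)$, so that $m(\gr(t)) = e^{-F(t)}$, the claim reduces to the one-sided bound $F(\ell) - F(0) \leq M\ell$. The key step will be to compute $(f^2)'(t)$ on the open set $\{t : f(t) > 1\}$: since $\gr$ is a geodesic and $\gfv \in C^2$, the metric compatibility of the Levi--Civita connection should give
\[
(f^2)'(t) \;=\; 2\,g\bigl(\nabla_{\gr'(t)}\nabla\gfv,\,\nabla\gfv(\gr(t))\bigr),
\]
and Cauchy--Schwarz combined with condition (H) (applicable since $f(t) > 1$ there) should then yield $|(f^2)'(t)| \leq 2\,\norm{\Hess\gfv(\gr(t))}\,f(t) \leq 2Mf(t)^2$, hence $|(\log f)'(t)| \leq M$ wherever $f(t) > 1$.

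From this, $F$ is $C^1$ with $|F'|\leq M$ on $\{f > 1\}$ and identically zero on $\{f \leq 1\}$. I would then argue that $F$ is globally $M$-Lipschitz on $[0,\ell]$ by splitting any subinterval at its first and last zeros of $F$: outside these points $F$ vanishes, and between them $F$ is $C^1$ with the derivative bound $M$. The main obstacle is making this splitting argument watertight at the boundary $\{f = 1\}$, where $F$ is continuous but not $C^1$; I expect this to be handled cleanly by taking one-sided limits from $\{f > 1\}$ and invoking the continuity of $F$. Under condition (B) the analogous differential inequality $|(\log|h|)'(t)| \leq M$ holds along $\gr$ for $h(t) = (\gfv\circ\gr)'(t)$, delivering the same estimate in the one-dimensional setting where (B) and (H) coincide.
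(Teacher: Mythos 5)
Your approach is essentially the one used in the paper: parametrize along a unit-speed path from $y$ to $x$, derive the differential inequality $|\partial_t \log m_\gr| \leq M$ from condition (H) (or (B)), and integrate. The computations are equivalent — the paper works directly with $\partial_t|\nabla\gfv(\gr(t))| \leq \norm{\Hess\gfv(\gr(t))}$ rather than passing through $(f^2)'$, but that is cosmetic.

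There is, however, one genuine gap. You parametrize by a \emph{minimising geodesic}, but Example \ref{c2pot} does not assume $X$ is complete, so a minimising geodesic between $x$ and $y$ need not exist. The paper sidesteps this by taking, for arbitrary $\ge > 0$, a unit-speed path of length $d(x,y) + \ge$, running the same estimate, and letting $\ge \to 0$ at the end. Note that your argument never actually uses the geodesic equation — only that $|\gr'| \equiv 1$, so that Cauchy--Schwarz gives $|g(\nabla_{\gr'}\nabla\gfv,\nabla\gfv)| \leq \norm{\Hess\gfv}\,f$ — so the fix is painless: drop the word ``geodesic'' and use the $\ge$-approximation. Regarding the boundary set $\{f = 1\}$: the paper avoids the piecewise-$C^1$ bookkeeping by observing that $m_\gr$ is absolutely continuous (being a composition of Lipschitz maps on a compact interval) and hence its increment is controlled by the essential supremum of $|\partial_t \log m_\gr|$ over points of differentiability; your splitting-at-first-and-last-zero argument can be made to work, but the absolute-continuity route is cleaner and is what you should write if you want to match the paper's level of rigour.
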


  \begin{proof}
    Here we assume condition (H); the proof under assumption (B) requires only a simple modification.
    
    Given $\ge > 0$, we first take a continuous arclength-parametrised path $\map{\gr}{[0,d(x,y)+\ge]}{X}$ connecting $x$ to $y$ (we may take $\ge = 0$ when X is complete, and the argument is slightly simpler in this case).
    Since $\gfv$ is twice continuously differentiable, the function $m_\gr := m \circ \gr$ is absolutely continuous on $[0,d(x,y)]$, and hence differentiable almost everywhere on this interval.
    We compute the derivative of $m_\gr(t)$ whenever $m_\gr$ is differentiable.
    If $t$ is such that $|\nabla \gfv(\gr(t))| \leq 1$ in a neighbourhood of $t$, then $\partial_t m_\gr(t) = 0$.
    If $t$ is such that $|\nabla \gfv(\gr(t))| > 1$ in a neighbourhood of $t$, then
    \begin{equation*}
      \partial_t m_\gr(t) = \partial_t (|\nabla \gfv(\gr(t))|^{-1})
      = \frac{-\partial_t |\nabla \gfv(\gr(t))|}{|\nabla \gfv(\gr(t))|^2}.
    \end{equation*}
    Using the estimate
    \begin{equation*}
      |\partial_t |\nabla \gfv(\gr(t))|| \leq \norm{\Hess \gfv(\gr(t))}		
    \end{equation*}
    along with assumption (H), we find that
    \begin{equation*}
      |\partial_t m_\gr(t)| \leq \frac{\norm{\Hess \gfv(\gr(t))}}{|\nabla \gfv(\gr(t))|^2} \leq \frac{M}{|\nabla \gfv(\gr(t))|}
    \end{equation*}
    for all $t$ such that $m_\gr(t)$ is differentiable.
    
    Since $m_\gr(t)$ is differentiable almost everywhere, we have
    \begin{align*}
      |\log m_\gr(d(x,y)+\ge) - \log m_\gr(0)| &\leq \sup_{0 < t < d(x,y)+\ge} |\partial_t \log m_\gr(t)|(d(x,y)+\ge) \\
      &\leq \sup_{0<t<d(x,y)+\ge} |\partial_t \log m_\gr(t)|(\ga+\ge),
    \end{align*}
    where the supremum is taken over all $t \in (0,d(x,y)+\ge)$ such that $m_\gr(t)$ is differentiable.
    Note that
    \begin{equation*}
      |\partial_t \log m_\gr(t)| = \frac{|\partial_t m_\gr(t)|}{|m_\gr(t)|},
    \end{equation*}
    and so by the estimate above we have that
    \begin{equation*}
      |\partial_t \log m_\gr(t)| \leq \frac{M}{|\nabla \gfv(\gr(t))|} |\nabla \gfv(\gr(t))| = M.
    \end{equation*}
    Therefore
    \begin{equation*}
      |\log m_\gr(d(x,y)+\ge) - \log m_\gr(0)| \leq M(\ga+\ge),
    \end{equation*}
    and so
    \begin{equation*}
      e^{|\log (m(y) / m(x))|} \leq e^{M(\ga+\ge)} =: c_\ga^\prime e^{M\ge}.
    \end{equation*}
    This holds for every $\ge > 0$, so by taking the limit of both sides as $\ge \to 0$ we obtain
    \begin{equation}\label{elog}
    		e^{|\log (m(y) / m(x))|} \leq c_\ga^\prime.
    \end{equation}
    Without loss of generality, we can suppose that $m(x) \geq m(y)$.
    Then $|\log (m(y)/m(x))| = \log (m(x)/m(y))$, and \eqref{elog} implies that
    \begin{equation*}
      \frac{m(x)}{m(y)} \leq c_\ga^\prime,
    \end{equation*}
    which completes the proof.
  \end{proof}

  \begin{claim}
    Condition (A) is satisfied, with $C_\ga = D_\gm e^{3\ga e^{M\ga}}$.
  \end{claim}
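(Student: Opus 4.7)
The plan is to follow the template of Example \ref{distfn} and reduce the doubling statement to a pointwise control of the weight $e^{-\gfv}$ on $B$ and $2B$. Namely, it suffices to find constants $C_\ga'$ and $C_\ga''$ such that
\begin{equation*}
  e^{-\gfv(x)} \leq C_\ga' e^{-\gfv(c_B)} \text{ for all } x \in 2B, \qquad e^{-\gfv(x)} \geq C_\ga'' e^{-\gfv(c_B)} \text{ for all } x \in B,
\end{equation*}
since then the doubling of the underlying measure $\gm$ gives $\gg(2B)/\gg(B) \leq D_\gm \, C_\ga'/C_\ga''$.

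To control $|\gfv(y) - \gfv(c_B)|$ for $y \in 2B$, I would argue along a (nearly length-minimising) unit-speed curve $\gr : [0, L] \to X$ joining $c_B$ to $y$, where $L \leq d(y, c_B) + \ge$ for arbitrarily small $\ge > 0$, exactly as in the proof of Claim \ref{a12} (to accommodate possible incompleteness of $X$). The fundamental theorem of calculus applied to $\gfv \circ \gr$ yields
\begin{equation*}
  |\gfv(y) - \gfv(c_B)| \leq \int_0^L |\nabla \gfv(\gr(t))| \, dt.
\end{equation*}
The integrand is bounded using two ingredients. First, the definition $m(z) = \min(1, |\nabla\gfv(z)|^{-1})$ gives the pointwise estimate $|\nabla\gfv(z)| \leq 1/m(z)$ (consider separately the cases $|\nabla\gfv(z)| \leq 1$ and $|\nabla\gfv(z)| > 1$). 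Second, Claim \ref{a12}, applied with the distance $t = d(\gr(t), c_B)$ (and sending $\ge \to 0$), gives $m(\gr(t)) \geq e^{-Mt} m(c_B)$, whence $|\nabla \gfv(\gr(t))| \leq e^{Mt}/m(c_B)$.

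Integrating and evaluating at $L \leq 2\ga m(c_B)$ for $y \in 2B$ and $L \leq \ga m(c_B)$ for $y \in B$, while using $m(c_B) \leq 1$, produces oscillation bounds of the form $|\gfv(y) - \gfv(c_B)| \lesssim \ga e^{M\ga}$ with explicit numerical constants. Exponentiating and combining these with the doubling of $\gm$ yields a constant of the form $C_\ga = D_\gm e^{3\ga e^{M\ga}}$ as claimed. The only subtlety is already dispatched in Claim \ref{a12} — namely, that a genuine minimising geodesic need not exist — and the remaining task is purely bookkeeping to assemble the precise exponential constant.
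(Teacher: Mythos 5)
Your approach is essentially the paper's: both reduce (A) to the pointwise weight comparison \eqref{localdoubling3}, and both control the oscillation of $\gfv$ on enlarged balls by combining Claim~\ref{a12} with the tautology $|\nabla\gfv| \leq 1/m$. The only stylistic difference is that you make the curve and the fundamental theorem of calculus explicit, integrating $|\nabla\gfv(\gr(t))| \leq e^{Mt}/m(c_B)$ over $[0,L]$; the paper instead writes the one-line mean value bound $|\gfv(x)-\gfv(c_B)| \leq \sup_{y\in\gl B}|\nabla\gfv(y)|\,d(x,c_B)$, which amounts to the same thing after observing that the (near-)minimising curve from $c_B$ to $x\in\gl B$ stays in $\gl B$. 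So this is the same proof, just written at a slightly finer resolution.

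One small arithmetic point worth noting, which affects you and the paper alike: for $y\in 2B$ the relevant distance to $c_B$ can be as large as $2\ga m(c_B) \le 2\ga$, so Claim~\ref{a12} really delivers $m(\gr(t)) \ge e^{-2M\ga}m(c_B)$ (for $t\le 2\ga$), not $e^{-M\ga}m(c_B)$. Your asserted oscillation bound ``$\lesssim \ga e^{M\ga}$'' on $2B$ should therefore read $\lesssim \ga e^{2M\ga}$, and assembling the pieces gives a constant of the form $D_\gm e^{c\ga e^{2M\ga}}$ rather than $D_\gm e^{3\ga e^{M\ga}}$. This does not threaten the qualitative statement of Condition (A) --- the constant remains finite and depends only on $\ga$, $M$, and $D_\gm$ --- but if you want to match the claimed $C_\ga$ precisely you need to track that factor of $2$ in the exponent of Claim~\ref{a12} when estimating on $2B$.
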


  \begin{proof}
    As in the previous example, it suffices to show that for every 
    $B \in \mc{B}_\ga$ we have 
    \begin{equation}
      \label{localdoubling3}
      \begin{cases}
        e^{-\gfv(x)} \leq C_{\ga}^{\prime} e^{-\gfv(c_B)} , \quad \text{when  $x\in 2B$,} \\
        e^{-\gfv(x)} \geq C_{\ga}^{\prime\prime} e^{-\gfv(c_B)} , \quad \text{when $x\in B$.}
      \end{cases}
    \end{equation}	 
    This is implied (with $C_\ga^\prime = e^{\ga c_\ga^\prime}$ and $C_\ga^{\prime\prime} = e^{-2\ga c_\ga^\prime}$) by the estimate
    \begin{equation*}
      |\gfv(x) - \gfv(c_B)| \leq \gl \ga c_\ga^\prime \quad \forall x\in \gl B,
    \end{equation*}
    for all $\gl \geq 1$ and $x \in \gl B$, which we now show.
    If $x \in \gl B$, then we have
    \begin{equation*}
      |\gfv(x) - \gfv(c_B)| \leq \sup_{y \in \gl B} |\nabla \gfv(y)| d(x,c_B).
    \end{equation*}
    Since $B$ is $\ga$-admissible, for all $x,y \in \gl B$ Claim \ref{a12} yields
    \begin{equation*}
      d(x,c_B) \leq \gl r_B \leq \gl \ga m(c_B) \leq \gl \ga c_{\ga}^\prime m(y) \leq \gl \ga c_{\ga}^\prime |\nabla \gfv(y)|^{-1},
    \end{equation*}
    and so $|\gfv(x) - \gfv(c_B)| \leq \gl\ga c_\ga^\prime$.
    As in the previous example, we then have
    \begin{equation*}
      C_\ga = D_\gm \frac{C_\ga^\prime}{C_\ga^{\prime\prime}} = D_\gm e^{3\ga c_\ga^\prime}.
    \end{equation*}
    Using $c_\ga^\prime = e^{M\ga}$ (from Claim \ref{a12}) yields the result.
  \end{proof}

  For a concrete subexample, let $(X,d,\gm)$ be the Euclidean space $\RR^n$ with the usual Euclidean distance and Lebesgue measure, and let $\gfv \in \RR[x_1,\ldots,x_n]$ be a polynomial.
  Condition (B) is easily verified, although condition (H) may not hold when $n \geq 2$.
  Taking $\gfv(x) = \frac{n\log(2\gp)}{2} + \frac{1}{2}\sum_{i=1}^n x_i^2$, we again recover the setting of Gaussian harmonic analysis.
  However, in this case the constants $c_\ga^\prime$ and $C_\ga$ have significantly worse $\ga$-dependence than the constants we found in the previous example.
  This is because conditions (B) and (H) are less restrictive than assuming $\gfv$ is given in terms of a distance function.

\end{example}

\begin{rmk} 
    The utility of an admissibility function is eventually judged by its applicability to
    the local Hardy space theory. More precisely, one needs to obtain suitable `error estimates'
    in the spirit of \cite[Section 5]{pP13}. The only known example of such at the time of writing
    is the setting of $\RR^n$ with $\gfv (x) = \frac{n}{2} \log \pi + |x|^2$ and 
    $m(x) = \min (1 , |x|^{-1})$.    
\end{rmk}

\section{Local tent spaces: the reflexive range}\label{localts}

We now introduce the main topic of the paper --- the non-uniformly local tent spaces.
Let $\gfv$ and $m$ be given and satisfy (A) from Section \ref{basicsetting}.
Denote the resulting weighted measure by $\gg$.

\begin{dfn}
  Let $0 < p,q < \infty$ and $\ga > 0$. 
  The \emph{local tent space} $\mf{t}^{p,q}_\ga(\gg)$ is the set of all measurable functions $f$ defined on the \emph{admissible region}
  \begin{equation*}
    D = \{ (y,t)\in X\times (0,\infty) : t < m(y) \} 
  \end{equation*}
  such that the functional
  \begin{equation*}
    \mc{A}^\ga_q f(x) = \left( \iint_{\Gamma_\ga (x)} |f(y,t)|^q \,\frac{d\gg(y)}{\gg(B(y,t))} \frac{dt}{t} \right)^{1/q}
  \end{equation*}
  satisfies
  \begin{equation*}
    \| f \|_{\mf{t}^{p,q}_\ga(\gg)} := \| \mc{A}_q^\ga f \|_{L^p(\gg)} < \infty .
  \end{equation*}
  Here $\Gamma_\ga (x) = \{ (y,t)\in D : d(x,y) < \ga t \}$ is the \emph{admissible cone} of aperture $\ga$ at $x\in X$.
\end{dfn}

It is clear that $\norm{\cdot}_{\mf{t}^{p,q}_\ga(\gg)}$ is a norm on $\mf{t}^{p,q}(\gg)$ when $p,q \in [1,\infty)$, and a quasinorm when $p<1$ or $q<1$.
Following the argument of \cite[Proposition 3.4]{aA13} with doubling replaced by local doubling, we can show that $\mf{t}^{p,q}_\ga(\gg)$ is complete in this (quasi-)norm.

\begin{rmk}
The choice $\gfv = 0$ and $m = \infty$ recovers the setting of global tent spaces 
\cite{aA13}, whereas $\gfv = 0$ and $m = 1$ gives the setting of uniformly local tent spaces
by Carbonaro, McIntosh and Morris \cite{CMM13}.
\end{rmk}

For $1 < p,q < \infty$, the properties of $\mf{t}^{p,q}_\ga(\gg)$ can be studied, as in \cite{HTV91}, by embedding the space into an $L^p$-space of $L^q$-valued functions.
More precisely, let us write $L^q(D)$ for the space of $q$-integrable functions on $D$ with respect to the measure $\frac{d\gg (y)\,dt}{t\gg (B(y,t))}$, so that
\begin{equation*}
  \injmap{J_\ga}{\mf{t}^{p,q}_\ga(\gg)}{L^p (\gg ; L^q(D))} , \quad J_\ga f(x) = \mb{1}_{\gG_\ga (x)}f
\end{equation*}
defines an isometry.
We will show that $J_\ga$ embeds $\mf{t}^{p,q}_\ga(\gg)$ as a complemented subspace of $L^p(\gg;L^q(D))$, with
\begin{equation*}
  N_\ga U(x;y,t) = \mb{1}_{B(y,\ga t)}(x) \fint_{B(y,\ga t)} U(z;y,t) \,d\gg (z) , \quad
  (U\in L^p(\gg; L^q(D)), \; x\in X , \; (y,t)\in D)
\end{equation*}
defining a bounded projection of $L^p(\gg;L^q(D))$ onto the image of $\mf{t}^{p,q}_\ga(\gg)$.

To see that $N_\ga$ is bounded, we first observe that
\begin{align*}
  |N_\ga U(x;y,t)| &\leq \mb{1}_{B(y,\ga t)}(x) \fint_{B(y,\ga t)} |U(z;y,t)| \,d\gg (z) \\
  &\leq \sup_{\substack{B\ni x \\ B\in\mc{B}_\ga}} \fint_B |U(z;y,t)| \,d\gg (z) \\
  &= \mc{M}_\ga U (x;y,t) ,
\end{align*}
where $\mc{M}_\ga$ is the $L^q(\gS)$-valued $\ga$-local maximal function from Appendix \ref{lmf}, with $\Sigma = (D,\frac{d\gg(y) \, dt}{t\gg(B(y,t))})$.
Consequently,
\begin{equation*}
  \| N_\ga U \|_{L^p(\gg ; L^q(D))} \leq \| \mc{M}_\ga U \|_{L^p(\gg ; L^q(D))} \lesssim_{p,q}
  C_{\ga , c_X} \| U \|_{L^p(\gg ; L^q(D))},
\end{equation*}
(see Appendix \ref{lmf}).

An immediate consequence of this vector-valued approach is the following theorem, detailing the behaviour of the local tent spaces in the reflexive range.

\begin{thm}
  \label{refl}
  Let $1 < p,q < \infty$. We have
  \begin{itemize}
  \item (change of aperture)
    $\| f \|_{\mf{t}^{p,q}_\ga (\gg)} \eqsim_{p,q,\ga,\gb} \| f \|_{\mf{t}_\gb^{p,q} (\gg)}$ for 
    $0 < \gb < \ga < \infty$,
  \item (duality) $\mf{t}^{p,q}_\ga(\gg)^* = \mf{t}_\ga^{p^\prime,q^\prime}(\gg)$, realised by the duality pairing
    \begin{equation*}
      \langle f , g \rangle = \iint_D f(y,t) \overline{g(y,t)} \, d\gg(y) \frac{dt}{t} ,
    \end{equation*}
  \item (complex interpolation) $[\mf{t}_\ga^{p_0,q_0}(\gg) , \mf{t}_\ga^{p_1,q_1}(\gg)]_\theta = \mf{t}_\ga^{p,q}(\gg)$ when $1 < p_0\leq p_1 < \infty$ and $1 < q_0 \leq q_1 < \infty$, with $1/p = (1-\theta)/p_0 + \theta /p_1$, $1/q = (1-\theta)/q_0 + \theta/q_1$.
  \end{itemize}
  \begin{proof}
    For our claim on change of aperture, we follow \cite{HTV91} and begin by noting that for suitable $f$ we have
    \begin{equation*}
      N_\ga J_\gb f(x;y,t) = \frac{\gg (B(y,\gb t))}{\gg (B(y,\ga t))} J_\ga f(x;y,t) .
    \end{equation*}
    Then
    \begin{align*}
      \| f \|_{\mf{t}^{p,q}_\ga (\gg)} = \| J_\ga f \|_{L^p(\gg ; L^q(D))} 
      &= \frac{\gg (B(y,\ga t))}{\gg (B(y,\gb t))} \| N_\ga J_\gb f \|_{L^p(\gg ; L^q(D))} \\
      &\lesssim_{p,q} C_{\gb , \ga / \gb} C_{\ga , c_X} \| J_\gb f \|_{L^p(\gg ; L^q(D))}
      = C_{\gb , \ga / \gb} C_{\ga , c_X} \| f \|_{\mf{t}^{p,q}_\gb (\gg)} ,
    \end{align*}
    where the constant are from Remark \ref{doublingconstant}.
    
    Now $\mf{t}^{p,q}_\ga(\gg)$ is embedded in $L^p(\gg ; L^q(D))$ as the range of the projection $N_\ga$, whose dual is isomorphic to the range of $N_\ga^*$ on $L^p(\gg ; L^q(D))^* = L^{p'}(\gg ; L^{q^\prime}(D))$, which, in turn, is isometrically isomorphic to $\mf{t}^{p',q^\prime}_\ga(\gg)$ (because $N_\ga^* = N_\ga$).
    The duality is realised as
    \begin{align*}
      \langle f , g \rangle = \langle J_\ga f , J_\ga g \rangle
      = \int_X \langle \mb{1}_{\gG_\ga (x)}f , \mb{1}_{\gG_\ga (x)}g \rangle \, d\gg (x)
      &= \int_X \iint_{\gG_\ga (x)} f(y,t) \overline{g(y,t)} \, 
      \frac{d\gg (y)}{\gg(B(y,t))}\frac{dt}{t} d\gg(x) \\
      &= \iint_D f(y,t) \overline{g(y,t)} \, 
      d \gg(y) \frac{dt}{t}.
    \end{align*}
    
    For $1 < p_0 \leq p_1 < \infty$ and $1 < q_0 \leq q_1 < \infty$ the interpolation of tent spaces follows, by the standard result on interpolation of complemented subspaces \cite[Section 1.17]{hT78}, from the fact that
    \begin{equation*}
      [L^{p_0}(\gg ; L^{q_0}(D)) , L^{p_1}(\gg ; L^{q_1}(D))]_\theta = L^{p}(\gg ; L^q(D)).
    \end{equation*}
  \end{proof}
\end{thm}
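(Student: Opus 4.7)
The plan is to transport all three properties from the ambient Bochner space $L^p(\gg; L^q(D))$ via the isometric embedding $J_\ga$ and the bounded projection $N_\ga$ constructed just before the theorem. The whole strategy follows the template of Harboure--Torrea--Viviani \cite{HTV91}, with the essential inputs being (i) boundedness of $N_\ga$, which the paper has already reduced to the vector-valued local maximal estimate of the appendix, and (ii) local doubling from Remark \ref{doublingconstant}.

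For change of aperture, the key observation is that applying $N_\ga$ to $J_\gb f$ recovers $J_\ga f$ up to the multiplicative factor $\gg(B(y,\gb t))/\gg(B(y,\ga t))$, because averaging the indicator $\mb{1}_{B(y,\gb t)}$ over $B(y,\ga t)$ yields precisely this ratio. Since $(y,t) \in D$ forces $t < m(y)$, the ball $B(y,\gb t)$ is $\gb$-admissible, and Remark \ref{doublingconstant} controls $\gg(B(y,\ga t)) \leq C_{\gb, \ga/\gb}\gg(B(y,\gb t))$. Rearranging and invoking boundedness of $N_\ga$ on $L^p(\gg; L^q(D))$, together with the isometric property of $J_\ga$ and $J_\gb$, gives the norm equivalence.

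For duality, I apply the abstract fact that if $N$ is a bounded projection on a reflexive Banach space $Y$, then the dual of $\operatorname{ran}(N)$ is canonically $\operatorname{ran}(N^*) \subset Y^*$. Here $Y = L^p(\gg; L^q(D))$, whose dual is $L^{p'}(\gg; L^{q'}(D))$ by standard Bochner duality. The computation $N_\ga^* = N_\ga$ is immediate from Fubini and the symmetry of the averaging kernel $\mb{1}_{B(y,\ga t)}(x)\mb{1}_{B(y,\ga t)}(z)/\gg(B(y,\ga t))$ in $(x,z)$. Unwinding the pairing through $J_\ga$ yields $\langle f, g \rangle = \int_X \iint_{\gG_\ga(x)} f\overline{g}\, \frac{d\gg(y)}{\gg(B(y,t))}\frac{dt}{t}\, d\gg(x)$, and swapping the order of integration by Fubini converts the inner double integral times $d\gg(x)$ into the announced formula on $D$.

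For interpolation, I invoke the standard theorem on interpolation of complemented subspaces (e.g.\ \cite[Section 1.17]{hT78}): if a projection $N$ is bounded on both endpoints of an interpolation couple, then $\operatorname{ran}(N)$ interpolates to $\operatorname{ran}(N)$ in the intermediate space. The projection $N_\ga$ has a single formula that is bounded on $L^{p_i}(\gg; L^{q_i}(D))$ for $i=0,1$ by the maximal estimate, and the Bochner spaces themselves interpolate via $[L^{p_0}(\gg; L^{q_0}(D)), L^{p_1}(\gg; L^{q_1}(D))]_\theta = L^p(\gg; L^q(D))$. Combining these gives the tent-space identity. The only genuine obstacle I anticipate is ensuring the vector-valued local maximal function estimate in the appendix is strong enough to supply uniform bounds on $N_\ga$ throughout the required range of $(p,q)$, but this has already been isolated as a separate lemma, so the present proof is essentially a bookkeeping exercise once that ingredient is in hand.
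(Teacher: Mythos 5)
Your proposal matches the paper's proof essentially verbatim: both transport change of aperture, duality, and interpolation from $L^p(\gg;L^q(D))$ via $J_\ga$ and the bounded projection $N_\ga$, using the identity $N_\ga J_\gb f = \frac{\gg(B(y,\gb t))}{\gg(B(y,\ga t))} J_\ga f$, the self-adjointness $N_\ga^* = N_\ga$, and Triebel's result on interpolation of complemented subspaces, all following Harboure--Torrea--Viviani. No meaningful differences.
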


\begin{rmk}\label{postreflexive}
    The dependence on $\ga > 1$ in the aperture change constant $C_{1,\ga} C_{\ga,c_X}$ (between
    $\mf{t}^{p,q}_\ga(\gg)$ and $\mf{t}^{p,q}_1(\gg)$) is not optimal in general.
    For instance, on $(\RR^n , dx)$, the optimal dependence is $\alpha^{n/\min (p,2)}$ (see \cite{pA11}), while $C_{1,\ga}C_{\ga,c_X} \eqsim \alpha^n$. 
    Note however, that on $(\RR^n, \gg)$ we have $C_{1,\ga}C_{\ga,c_X} \lesssim e^{c\ga^2}$ for some constant $c$.
    We return to this in Section \ref{nncurv}.
\end{rmk}

The change of aperture and interpolation results extend to $1 \leq p,q < \infty$ by a convex reduction due to Bernal (\cite{aB92}, see also \cite{aA13}).

\begin{cor}
\label{bernal}
  Let $1 \leq q < \infty$. We have
  \begin{itemize}
  \item (change of aperture)
    $\| f \|_{\mf{t}^{1,q}_\ga (\gg)} \eqsim_{q,\ga,\gb} \| f \|_{\mf{t}_\gb^{1,q} (\gg)}$ for 
    $0 < \gb < \ga < \infty$,
  \item (complex interpolation) $[\mf{t}_\ga^{p_0,q_0}(\gg) , \mf{t}_\ga^{p_1,q_1}(\gg)]_\theta = \mf{t}_\ga^{p,q}(\gg)$ when $1 \leq p_0\leq p_1 < \infty$ and $1 < q_0 \leq q_1 < \infty$, with $1/p = (1-\theta)/p_0 + \theta /p_1$, $1/q = (1-\theta)/q_0 + \theta/q_1$.
  \end{itemize}
\end{cor}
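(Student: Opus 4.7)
The plan is to reduce both assertions to Theorem \ref{refl} via the convexification trick of Bernal. For nonnegative $g$ on $D$ and $s > 0$, the substitution $(g^s)^q = g^{qs}$ inside the conical integral defining $\mc{A}^\ga_q$ gives the pointwise identity $\mc{A}^\ga_q(g^s) = (\mc{A}^\ga_{qs} g)^s$, and taking $L^p(\gg)$-norms yields the key homogeneity
\[
\|g^s\|_{\mf{t}^{p,q}_\ga(\gg)} = \|g\|_{\mf{t}^{ps,qs}_\ga(\gg)}^s, \qquad 0 < p,q < \infty, \; s > 0.
\]
Since $\mc{A}^\ga_q f$ depends on $f$ only through $|f|$, this identity lets us reduce statements about $\mf{t}^{1,q}_\ga(\gg)$, and more generally about spaces with exponent $1$, to statements in the reflexive range by passing from $f$ to $|f|^{1/s}$ for a suitable $s > 1$.

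For the change of aperture, given $f \in \mf{t}^{1,q}_\ga(\gg)$ with $q \geq 1$, fix any $s > 1$ and set $g = |f|^{1/s}$. Since $s > 1$ and $qs > 1$, the parameters $(s,qs)$ lie in the reflexive range of Theorem \ref{refl}, giving $\|g\|_{\mf{t}^{s,qs}_\ga(\gg)} \eqsim \|g\|_{\mf{t}^{s,qs}_\gb(\gg)}$. Raising to the power $s$ and applying the homogeneity identity on both sides then yields
\[
\|f\|_{\mf{t}^{1,q}_\ga(\gg)} = \|g\|_{\mf{t}^{s,qs}_\ga(\gg)}^s \eqsim \|g\|_{\mf{t}^{s,qs}_\gb(\gg)}^s = \|f\|_{\mf{t}^{1,q}_\gb(\gg)}.
\]

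For the complex interpolation statement, fix $s > 1$ so that $p_i s$ and $q_i s$ exceed $1$ for $i = 0,1$. The homogeneity identity realises each $\mf{t}^{p_i,q_i}_\ga(\gg)$ as the $s$-concavification of $\mf{t}^{p_i s, q_i s}_\ga(\gg)$ in the Banach lattice sense: its elements are exactly the $g^s\,\mathrm{sgn}(g)$ with $g \in \mf{t}^{p_i s, q_i s}_\ga(\gg)$, and the norms correspond via $\|\cdot\|^s$. Bernal's key observation in \cite{aB92} is that complex interpolation commutes with $s$-concavification on couples of Banach function lattices, so
\[
[\mf{t}^{p_0,q_0}_\ga(\gg), \mf{t}^{p_1,q_1}_\ga(\gg)]_\theta
= \bigl([\mf{t}^{p_0 s, q_0 s}_\ga(\gg), \mf{t}^{p_1 s, q_1 s}_\ga(\gg)]_\theta\bigr)^{s},
\]
where the outer exponent again denotes $s$-concavification. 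All four parameters on the right exceed $1$, so Theorem \ref{refl} identifies the inner bracket with $\mf{t}^{ps,qs}_\ga(\gg)$, and the homogeneity identity shows that its $s$-concavification is precisely $\mf{t}^{p,q}_\ga(\gg)$, as required.

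The only real obstacle is verifying that the abstract concavification compatibility of \cite{aB92} genuinely applies here; this reduces to recognising $\mf{t}^{p,q}_\ga(\gg)$ as a Banach function lattice over $D$ with the evident pointwise order, which is immediate from the isometric embedding $J_\ga$ of Section \ref{localts} together with the monotonicity of $\mc{A}^\ga_q$ in $|f|$. The rest is a bookkeeping exercise matched to the template of \cite{aA13}.
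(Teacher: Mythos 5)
Your proposal is correct and follows essentially the same route the paper intends: the paper's proof of Corollary~\ref{bernal} is a one-line citation of Bernal's convex reduction, and you correctly reconstruct it via the homogeneity identity $\|g^s\|_{\mf{t}^{p,q}_\ga(\gg)} = \|g\|_{\mf{t}^{ps,qs}_\ga(\gg)}^s$ together with the compatibility of complex interpolation with $s$-concavification on Banach function lattices. The only point worth tightening is in the interpolation step: the commutation of complex interpolation with concavification is most cleanly justified by noting that these tent spaces are Banach function lattices with order-continuous norm (hence Fatou), so complex interpolation agrees with the Calder\'on product, which visibly commutes with pointwise power operations --- but you have already flagged this as the verification to perform.
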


\section{Endpoints: $\mf{t}^{1,q}$ and $\mf{t}^{\infty,q}$}\label{nncurv}

In this section, under the assumption that the space $X$ is complete, we study the endpoints of the local tent space scale: the spaces $\mf{t}^{1,q}_\ga(\gg)$ and $\mf{t}^{\infty,q}_\ga(\gg)$ (with $1 \leq q < \infty$).
In particular, employing Corollary \ref{bernal} we prove following the argument in \cite{mK11} that elements of $\mf{t}^{1,q}_\ga(\gg)$ can be decomposed into `atoms'. From this we deduce duality, interpolation, and (quantified) change of aperture results for the full local tent space scale $\mf{t}^{p,q}_\ga(\gg)$ ($1 \leq p \leq \infty$, $1 \leq q < \infty$).\footnote{We do not consider $q = \infty$. As in \cite{CMS85}, this requires additional continuity and convergence assumptions.} We write $\mf{t}^{1,q} := \mf{t}^{1,q}_1$ for notational simplicity.

\subsection{Atomic decomposition}

 Fix $(X,d,\gm)$, $\gfv$, and $m$ as in the previous section.
 The \emph{admissible tent} $T(O)$ over an open set $O \subset X$ is given by
\begin{equation*}
  T(O) := D \sm \gG(O^c),
\end{equation*}
where $\gG(O^c) := \cup_{x \in O^c} \gG(x)$.

\begin{dfn}
  Fix $\ga > 0$ and $q \geq 1$.
  A function $a$ on $D$ is called an \emph{$\ga$-$\mf{t}^{1,q}$-atom} (or more succinctly, a $\ga$-atom) if there exists an $\ga$-admissible ball $B\in\mc{B}_\ga$ such that $\supp a \subset T(B)$ and
  \begin{equation*}
    \iint_{T(B)} |a(y,t)|^q \, d\gg(y) \frac{dt}{t} \leq \frac{1}{\gamma (B)^{q-1}}.
  \end{equation*}
\end{dfn}

 Observe that for such a function $a$,
\begin{equation*}
  \| a \|_{\mf{t}^{1,q}(\gg)} = \int_B \mc{A}_qa(x) \, d\gg (x) 
  \leq \gg(B)^{\frac{q-1}{q}} \left( \int_B \mc{A}_qa(x)^q \, d\gg (x) \right)^{1/q} \lesssim 1 .
\end{equation*}
Furthermore, if $(a_k)_{k \in \NN}$ is a sequence of $\ga$-$\mf{t}^{1,q}$-atoms for some $\ga > 0$, then the series $f = \sum_k \gl_k a_k$ converges in $\mf{t}^{1,q}(\gg)$ when $\sum_k |\gl_k| < \infty$.
 The \emph{atomic tent space} $\mf{t}^{1,q}_{\text{at}}(\gg)$ consisting of such functions $f$ becomes
 a Banach space when normed by
\begin{equation*}
  \| f \|_{\mf{t}^{1,q}_{\text{at}}(\gg)} = \inf \Big\{ \sum_k |\gl_k| : f = \sum_k \gl_k a_k \Big\} .
\end{equation*}

 \begin{lem}\label{balls}
  Suppose that $E\subset X$ is a bounded open set. Then there exists a countable sequence of disjoint admissible balls
  $B^j\subset E$ such that
  \begin{equation*}
    T(E) \subset \bigcup_{j\geq 1} T(5B^j) .
  \end{equation*}
\end{lem}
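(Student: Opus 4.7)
\noindent\emph{Proof plan.} The strategy is a Whitney-type decomposition in which the radii are chosen so that the expansion factor coming from the $5r$-covering lemma agrees exactly with the factor $5$ in the statement. For each $y \in E$ set
\[
  r(y) := \min\bigl(m(y),\, d(y, E^c)\bigr).
\]
Since $E$ is bounded and open and $m$ takes values in $(0,\infty)$, we have $0 < r(y) \leq \mathrm{diam}(E) < \infty$ for every $y \in E$, so the family $\mathcal{F} := \{B(y, r(y)) : y \in E\}$ has uniformly bounded radii.

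The geometric doubling of $(X,d)$ permits the standard $5r$-covering lemma: extract a countable disjoint subfamily $\{B^j\} \subset \mathcal{F}$, with $B^j = B(c^j, r^j)$ and $r^j := r(c^j)$, such that every $B(y, r(y)) \in \mathcal{F}$ meets some selected $B^j$ with $r(y) \leq 2 r^j$, and consequently $B(y, r(y)) \subset 5 B^j$. The defining inequalities $r^j \leq m(c^j)$ and $r^j \leq d(c^j, E^c)$ ensure, respectively, that each $B^j$ is $1$-admissible and contained in $E$; disjointness is built into the selection.

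It remains to verify $T(E) \subset \bigcup_j T(5 B^j)$. Fix $(y, t) \in T(E)$; then $y \in E$, $t < m(y)$, and $t \leq d(y, E^c)$, hence in particular $t \leq r(y)$. Pick $j$ so that $B(y, r(y))$ meets $B^j$ with $r(y) \leq 2 r^j$; the nonempty intersection gives $d(y, c^j) < r(y) + r^j$, and therefore
\[
  d(y, c^j) + t \;\leq\; d(y, c^j) + r(y) \;<\; 2 r(y) + r^j \;\leq\; 5 r^j,
\]
which is exactly the inclusion $(y, t) \in T(5 B^j)$.

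The only conceptually delicate step is choosing $r(y)$ equal to the full quantity $\min(m(y), d(y, E^c))$ rather than to a smaller fraction of it: this is precisely what makes the bound $t \leq r(y)$ sharp enough that the $5$-fold expansion produced by the Vitali lemma still accommodates the tent element $(y, t)$. All other steps are routine bookkeeping with the Vitali construction, and no hypothesis on the variation of $m$ is needed.
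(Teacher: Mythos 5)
Your proof is correct. Let me compare it with the paper's.

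Both arguments are Vitali-type covering constructions, but you and the paper organise them slightly differently. The paper selects the disjoint balls by a direct greedy scheme over \emph{all} admissible balls contained in $E$: at stage $k$ one sets $\delta_k = \sup\{r_B : B \subset E \text{ admissible},\ B \cap B^1 = \cdots = B \cap B^{k-1} = \varnothing\}$ and picks $B^k$ disjoint from the earlier ones with $r_k > \delta_k/2$. The key observation there is that for $(y,t) \in T(E)$ the ball $B(y,t)$ is \emph{itself} an admissible ball contained in $E$, hence a legitimate competitor in each supremum; geometric doubling plus boundedness of $E$ force $\delta_k \to 0$, so $B(y,t)$ must meet the first $B^j$ it fails to avoid, whence $t \leq \delta_j < 2 r_j$ and $B(y,t) \subset 5B^j$. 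You instead introduce the explicit Whitney-type radius $r(y) = \min\bigl(m(y), d(y,E^c)\bigr)$, apply the abstract $5r$-covering lemma to the family $\{B(y,r(y))\}_{y\in E}$ (geometric doubling supplies separability, hence countability of the disjoint subfamily), and observe that $(y,t) \in T(E)$ forces $t \le r(y)$, so the tent element rides inside the ball used in the covering. The payoff of your phrasing is that it makes the Whitney/Vitali structure visible and imports the covering lemma as a black box; the paper's phrasing avoids the auxiliary function $r$ entirely by exploiting the fact that the tent condition already hands you an admissible ball $B(y,t)$ inside $E$. Both deliver exactly the constant $5$ for the same reason. One small bookkeeping point: you write $d(y,c^j) + t \leq 5r^j$, and you should note explicitly that this gives $B(y,t) \subset 5B^j$, which is what $(y,t) \in T(5B^j)$ unpacks to; this is immediate from the triangle inequality, so it's cosmetic.
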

\begin{proof}
  Let $\gd_1 = \sup \{ r_B : B\subset E \text{ admissible} \}$ and begin by choosing an
  admissible ball
  $B^1\subset E$ with radius $r_1 > \gd_1 / 2$. Proceeding inductively we put
  \begin{equation*}
    \gd_{k+1} = \sup \{ r_B : B\subset E \text{ admissible} , B\cap B^j = \varnothing , j=1,\ldots , k \}
  \end{equation*}
  and choose (if possible) an admissible ball $B^{k+1}\subset E$ with radius $r_{k+1} > \gd_{k+1}/2$
  disjoint from $B^1,\ldots , B^k$. Given a $(y,t)\in T(E)$ we show that 
  $B(y,t)\subset 5B^j$ for some $j$.
  It is possible to pick the first index $j$ for which $B(y,t)\cap B^j$ is nonempty. Indeed, if
  on the contrary $B(y,t)$ was disjoint from every $B^j$, then, $B(y,t)$ being admissible and contained in $E$, we would
  have $t\geq \gd_j$ for all $j$ which under the assumption that $(X,d)$ is geometrically doubling contradicts the
  boundedness of $E$.
  By construction, we then have $t\leq \gd_j \leq 2r_j$ and so $B(y,t)\subset 5B^j$, as required.
\end{proof}

\begin{rmk}
  The above lemma is a stronger version of a `local Vitali covering lemma', 
  which is otherwise identical but claims only that
  $E\subset \bigcup_{j\geq 1} 5B^j$ without reference to tents (see also Remark \ref{locvitali} in the Appendix).
\end{rmk}

The following lemma regarding pointwise estimates for $\mc{A}$-functionals, which appears implicitly in \cite[Theorem 4']{CMS85}, lies at the heart of our proof of the atomic
decomposition.
This is the only point at which we seem to need completeness; we suspect that this assumption can be removed or at least weakened.
\begin{lem}
\label{pointwise2}
Suppose $X$ is complete, let $q \geq 1$ and
let $f$ be a measurable function in $D$. Let $\lambda > 0$ and write $E = \{ x\in X : \mc{A}_q^3 f(x) > \lambda \}$. Then
$\mc{A}_q(f\mb{1}_{D\sm T(E)})(x) \leq \lambda$ for all $x\in X$.
\begin{proof}
  If $x\not\in E$, then $\mc{A}_q(f\mb{1}_{D\sm T(E)})(x) \leq \mc{A}_q^3f(x) \leq \lambda$. 
  
  If $x\in E$, then by completeness of $X$ we can choose a point
  $x_0\in X\sm E$ such that $d(x,x_0) = d(x,X\sm E)$. We show that $\Gamma (x) \sm T(E) \subset \Gamma^3(x_0)$:
  Let $(y,t) \in \Gamma (x) \sm T(E)$ so that $d(x,y)<t$ and  $B(y,t)\not\subset E$. Now $B(y,t)\subset B(x,2t)$,
  which means that $B(x,2t)\not\subset E$ and so $x_0\in B(x,2t)$. Moreover $B(x,2t)\subset B(y,3t)$ so that
  $(y,t)\in \Gamma^3(x_0)$. Therefore $\mc{A}_q(f\mb{1}_{D\sm T(E)})(x) \leq \mc{A}_q^3f(x_0) \leq \lambda$.
\end{proof}
\end{lem}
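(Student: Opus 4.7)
The plan is to split the argument into two cases depending on whether $x \in E$ or not. When $x \notin E$, the bound should fall out trivially: the functional $\mc{A}_q^3 f(x)$ integrates $|f|^q$ over the wider cone $\gG^3(x) \supset \gG(x)$, so restricting both the aperture (down to $1$) and the integrand (by the factor $\mb{1}_{D \sm T(E)}$) can only decrease the quantity, giving $\mc{A}_q(f\mb{1}_{D \sm T(E)})(x) \leq \mc{A}_q^3 f(x) \leq \lambda$.

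The substantive case is $x \in E$. My plan is to locate a ``witness'' point $x_0 \in X \sm E$ as close to $x$ as possible---using completeness to attain the minimum distance---and then show that the portion of $\gG(x)$ contributing to $\mc{A}_q(f \mb{1}_{D \sm T(E)})(x)$ is contained in $\gG^3(x_0)$. Granting this inclusion, the left-hand side is bounded by $\mc{A}_q^3 f(x_0)$, which is at most $\lambda$ because $x_0 \notin E$.

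To justify the inclusion, I take $(y, t) \in \gG(x) \sm T(E)$. The condition $(y,t) \notin T(E)$ means $B(y,t) \not\subset E$, so there is some $z \in B(y,t) \cap (X \sm E)$. Combining $d(x, y) < t$ with $d(y, z) < t$ yields $d(x, X \sm E) \leq d(x, z) < 2t$, and the minimality of $x_0$ then forces $d(x, x_0) < 2t$. A further triangle inequality gives $d(x_0, y) < 3t$, placing $(y, t)$ in $\gG^3(x_0)$, as required.

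The main obstacle I expect is the existence of the minimising $x_0$: in an arbitrary complete metric space, the distance from a point to a closed set need not be attained. In the motivating Riemannian and Euclidean settings a Hopf--Rinow-type properness does deliver the minimiser from completeness alone, so the argument works cleanly there. In full generality one would instead pick a sequence $x_n \in X \sm E$ with $d(x,x_n) \to d(x, X \sm E)$; running the above geometric estimate for $x_n$ and then passing to the limit should still yield the desired inclusion up to an arbitrarily small enlargement of the aperture, which is presumably why the authors expect the completeness hypothesis to be removable or at least weakenable.
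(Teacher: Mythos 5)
Your argument is correct and is essentially the paper's own proof: the first case is immediate, and in the second case you locate a nearest point $x_0 \in X \sm E$ and establish the inclusion $\Gamma(x) \sm T(E) \subset \Gamma^3(x_0)$ via the same triangle-inequality chain (the paper phrases it as $B(y,t) \subset B(x,2t) \subset B(y,3t)$, while you pick a witness $z \in B(y,t) \sm E$, but these are identical computations). One remark on your stated concern about the existence of the minimiser $x_0$: in this paper the space is assumed geometrically doubling, so bounded sets are totally bounded; together with completeness this gives properness (closed bounded sets are compact), and the nearest point in the closed set $X \sm E$ is therefore attained within a compact ball around $x$. So the completeness hypothesis suffices here precisely because of the standing geometric doubling assumption, and the sequential workaround you sketch (which, as you note, would degrade the aperture) is not needed.
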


\begin{thm}\label{atomic}
  Suppose $X$ is complete, and let $q \geq 1$.
  For every $f \in \mf{t}^{1,q}(\gg)$, there exist $5$-$\mf{t}^{1,q}$-atoms $a_k$ and scalars $\gl_k$ such that
  \begin{equation}\label{ad}
    f = \sum_k \gl_k a_k,
  \end{equation}
  with
  \begin{equation*}
    \sum_k |\gl_k| \simeq \norm{f}_{\mf{t}^{1,q}(\gg)} .
  \end{equation*}
\end{thm}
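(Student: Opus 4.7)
The plan is to follow the classical Coifman–Meyer–Stein Whitney decomposition, using Lemma \ref{pointwise2} (the pointwise `good-$\lambda$' bound, which is where completeness enters) and the admissible covering Lemma \ref{balls} in place of the standard global Whitney lemma. Given $f \in \mf{t}^{1,q}(\gg)$, I first invoke Corollary \ref{bernal} to obtain $\|\mc{A}_q^3 f\|_{L^1(\gg)} \lesssim \|f\|_{\mf{t}^{1,q}(\gg)}$, and set $E_k := \{x \in X : \mc{A}_q^3 f(x) > 2^k\}$ for $k \in \ZZ$. These are open sets of finite $\gg$-measure, nested decreasingly in $k$; boundedness of each $E_k$ may be arranged by first approximating $f$ by truncations to bounded strips and passing to the limit at the end.

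For each $k$, Lemma \ref{balls} produces a countable disjoint family of admissible balls $B_k^j \subset E_k$ with $T(E_k) \subset \bigcup_j T(5 B_k^j)$, and each $5 B_k^j$ is automatically $5$-admissible. I disjointify by setting
\[ T_k^j := \bigl(T(E_k) \setminus T(E_{k+1})\bigr) \cap T(5 B_k^j) \setminus \bigcup_{i<j} T(5 B_k^i), \]
so that $\{T_k^j\}_{k,j}$ are pairwise disjoint subsets of $D$ covering $\bigcup_k T(E_k)$ (the intersection $\bigcap_k T(E_k)$ is empty since balls have positive $\gg$-measure while $\bigcap_k E_k$ is null). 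I then set $\lambda_k^j := 2^{k+1}\gg(5 B_k^j)$ and $a_k^j := (\lambda_k^j)^{-1} f \mb{1}_{T_k^j}$, and aim to show that the $a_k^j$ are $5$-$\mf{t}^{1,q}$-atoms with $\sum_{k,j} \lambda_k^j \lesssim \|f\|_{\mf{t}^{1,q}(\gg)}$.

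The atom property is the crucial application of Lemma \ref{pointwise2}: since $T_k^j \subset D \setminus T(E_{k+1})$ that lemma gives $\mc{A}_q(f \mb{1}_{T_k^j})(x) \leq 2^{k+1}$ pointwise, and this function is supported in $5 B_k^j$ because $T_k^j \subset T(5 B_k^j)$. Unwinding by Fubini yields
\[ \iint_{T_k^j} |f|^q \, d\gg \frac{dt}{t} = \int_{5 B_k^j} \mc{A}_q(f \mb{1}_{T_k^j})(x)^q \, d\gg(x) \leq (2^{k+1})^q \gg(5 B_k^j), \]
which is precisely the $5$-atom size condition after normalising by $\lambda_k^j$. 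Summing, using disjointness of the $B_k^j \subset E_k$, local doubling $\gg(5 B_k^j) \lesssim \gg(B_k^j)$, and a dyadic layer-cake comparison,
\[ \sum_{k,j} \lambda_k^j \lesssim \sum_k 2^k \gg(E_k) \lesssim \|\mc{A}_q^3 f\|_{L^1(\gg)} \lesssim \|f\|_{\mf{t}^{1,q}(\gg)}. \]
The reverse inequality $\|f\|_{\mf{t}^{1,q}(\gg)} \lesssim \sum_{k,j} |\lambda_k^j|$ follows from the triangle inequality and the atom norm bound $\|a\|_{\mf{t}^{1,q}(\gg)} \lesssim 1$ already noted in the excerpt.

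The main obstacle I expect is justifying genuine $\mf{t}^{1,q}$-convergence of $\sum_{k,j} \lambda_k^j a_k^j$ to $f$. The $k \to +\infty$ tail $f \mb{1}_{T(E_N)}$ is controlled by observing that $\mc{A}_q(f \mb{1}_{T(E_N)})$ is supported in $E_N$, is dominated by $\mc{A}_q f \in L^1(\gg)$, and $\gg(E_N) \to 0$; dominated convergence does the rest. The $k \to -\infty$ tail is the delicate point in the non-uniformly local setting, since one cannot exploit finiteness of $\gg(X)$ (only in the Gaussian case) to integrate the pointwise bound $\mc{A}_q(f \mb{1}_{D \setminus T(E_{-N})}) \leq 2^{-N}$ from Lemma \ref{pointwise2}; however, combined with the integrable majorant $\mc{A}_q f$ this is still enough for dominated convergence, giving $\|f \mb{1}_{D \setminus T(E_{-N})}\|_{\mf{t}^{1,q}(\gg)} \to 0$. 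Everything else is bookkeeping.
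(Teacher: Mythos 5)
Your proposal is correct and follows essentially the same route as the paper: identical level sets $E_k$, the same use of Corollary \ref{bernal}, Lemma \ref{balls}, and Lemma \ref{pointwise2}, the same Fubini verification of the atom size condition, and the same reduction to boundedly supported $f$ with a limiting argument at the end. The only cosmetic differences are that you disjointify the tents $T(5B_k^j)$ measurably rather than taking a subordinate partition of unity $\chi_k^j$ as the paper does, and you set $\lambda_k^j = 2^{k+1}\gg(5B_k^j)$ outright where the paper takes the exact $L^q$ mass of $f\mb{1}_{A_k}$ over $5B_k^j$ and then bounds it by the same quantity.
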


We call the series \eqref{ad} an \emph{atomic decomposition} of $f$.

\begin{proof}
  We first derive atomic decompositions for the dense class of boundedly-supported functions in $\mf{t}^{1,q}(\gg)$, and then argue by completeness of $\mf{t}^{1,q}_{\text{at}}(\gg)$.
  Given a function $f$ in $\mf{t}^{1,q}(\gamma)$ with bounded support, we consider the bounded open sets
  \begin{equation*}
    E_k = \{ x\in X : \mc{A}_q^3 f(x) > 2^k \} 
  \end{equation*}
  for each integer $k$.
  Applying Lemma \ref{balls} to these sets provides us with 
  disjoint balls $B_k^j\subset E_k$ such that
  \begin{equation*}
    T(E_k) \subset \bigcup_{j\geq 1} T(5B_k^j) .
  \end{equation*}
  In addition, we take a collection of functions $\chi_k^j$ (cf. \cite[Theorem 11]{mK11}) satisfying
  \begin{equation*}    
    0\leq \chi_k^j \leq 1, \quad 
    \sum_{j\geq 1} \chi_k^j = 1 \text{ on } T(E_k), \quad \text{and} \quad 
    \supp \chi_k^j \subset T(5B_k^j).
  \end{equation*}
  Writing $A_k := T(E_k)\setminus T(E_{k+1})$, we can decompose $f$ as
  \begin{equation*}
    f = \sum_{k\in\ZZ} \mb{1}_{A_k} f
    = \sum_{k\in\ZZ} \sum_{j\geq 1} \chi_k^j \mb{1}_{A_k} f
    = \sum_{k\in\ZZ} \sum_{j\geq 1} \lambda_k^j a_k^j ,
  \end{equation*}
  where
  \begin{equation*}
    \lambda_k^j = \gamma (5B_k^j)^{1/{q^\prime}} \left( \int_{5B_k^j} 
      \mc{A}_q(f\mb{1}_{A_k})(x)^q \, d\gamma(x) \right)^{1/q} .
  \end{equation*}
  Observe that $a_k^j = \chi_k^j \mb{1}_{A_k} f / \lambda_k^j$ is a $5$-atom supported in $T(5B_k^j)$.
  
  What remains is to control the sum of the scalars $\lambda_k^j$.
  By Lemma \ref{pointwise2}, we have
  \begin{equation*}
  \mc{A}_q\left(f\mb{1}_{A_k}\right)(x) \leq \mc{A}_q\left(f\mb{1}_{D\sm T(E_{k+1})}\right)(x) \leq 2^{k+1}
  \end{equation*}
  for all $x\in X$, and so
  \begin{equation*}
    \gl_k^j \leq \gg (5B_k^j) 2^{k+1} .
  \end{equation*}
  Consequently,
  \begin{equation*}
  \sum_{k\in\ZZ} \sum_{j\geq 1} \lambda_k^j 
  \leq \sum_{k\in\ZZ} 2^{k+1} \sum_{j\geq 1} \gamma (5B_k^j)  
  \lesssim \sum_{k\in\ZZ} 2^{k+1}\gamma (E_k)
  \lesssim \| \mc{A}_q^3 f \|_{L^1(\gamma)}
  \lesssim \| f \|_{\mf{t}^{1,q}(\gamma)} ,
  \end{equation*}
where the last step follows by Corollary \ref{bernal}.
  
  We have thus shown that $\| f \|_{\mf{t}^{1,q}_{\text{at}}(\gg)} \eqsim \| f \|_{\mf{t}^{1,q}(\gg)}$ for boundedly supported $f$ in $\mf{t}^{1,q}(\gg)$.
  Since the class of such functions is dense in $\mf{t}^{1,q}(\gg)$, the completeness of $\mf{t}^{1,q}_{\text{at}}(\gg)$ guarantees that every $f\in\mf{t}^{1,q}(\gg)$ has an atomic decomposition.
\end{proof}

\begin{rmk}\label{mvnpatoms}
  Maas, van Neerven and Portal established the above result in the setting of Gaussian $\RR^n$ by a different method, which relies on Gaussian Whitney decompositions \cite[Theorem 3.4]{MvNP12}.
  In addition, they showed that decompositions into $\ga$-atoms exist for every $\ga > 1$ \cite[Lemma 3.6]{MvNP12}.
  Such a result may not hold in this level of generality due to the lack of geometric information.
\end{rmk}

\subsection{Duality, interpolation and change of aperture}

We present three corollaries of the atomic decomposition theorem, which holds when $X$ is complete.

The dual of $\mf{t}^{1,q}(\gg)$ can be identified with the space $\mf{t}^{\infty,q^\prime} (\gg)$, consisting of those functions $g$ on $D$ for which
\begin{equation*}
  \| g \|_{\mf{t}^{\infty,{q^\prime}} (\gg)} = \sup_{B\in\mc{B}_5} 
  \left( \frac{1}{\gg (B)} \iint_{T(B)} |g(y,t)|^{q^\prime} \, d\gg(y) \frac{dt}{t} \right)^{1/{q^\prime}} 
\end{equation*}
is finite.
Note that we take a supremum over $5$-admissible balls, reflecting the fact that we have atomic decompositions of elements of $\mf{t}^{1,q}(\gg)$ into $5$-atoms.
For the reader's convenience, we present the standard proof, following 
\cite[Theorem 1 (b)]{CMS85}.

\begin{cor}\label{dualitythm}
   Suppose $X$ is complete, and let $q \geq 1$.
   Then the pairing
   \begin{equation}
     \label{duality}
     \langle f , g \rangle = \iint_D f(y,t) \overline{g(y,t)} \, d\gg(y) \frac{dt}{t} , 
     \quad f\in \mf{t}^{1,q}(\gamma) , \quad g\in \mf{t}^{\infty,{q^\prime}} (\gamma) ,
   \end{equation}
   realises $\mf{t}^{\infty,q^\prime} (\gamma)$ as the dual of $\mf{t}^{1,q}(\gamma)$.
\end{cor}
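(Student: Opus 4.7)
The plan is to follow the classical Coifman--Meyer--Stein argument \cite[Theorem 1(b)]{CMS85}, with the atomic decomposition (Theorem \ref{atomic}) playing the role that the usual tent-space atomic decomposition plays there. There are two directions.

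First, the easy direction: every $g\in\mf{t}^{\infty,q'}(\gg)$ induces a bounded linear functional on $\mf{t}^{1,q}(\gg)$ via \eqref{duality}. By the atomic decomposition it suffices to bound $|\langle a,g\rangle|$ uniformly for each $5$-$\mf{t}^{1,q}$-atom $a$ supported in $T(B)$ with $B\in\mc{B}_5$. Applying H\"older's inequality in the variable $(y,t)\in T(B)$ with exponents $q,q'$ yields
\begin{equation*}
  |\langle a,g\rangle|
  \leq \Big( \iint_{T(B)} |a|^q \, d\gg \tfrac{dt}{t} \Big)^{1/q}
        \Big( \iint_{T(B)} |g|^{q'} \, d\gg \tfrac{dt}{t} \Big)^{1/q'}
  \leq \gg(B)^{-1/q'}\cdot \gg(B)^{1/q'}\|g\|_{\mf{t}^{\infty,q'}(\gg)}.
\end{equation*}
Summing an atomic decomposition of $f$ then gives $|\langle f,g\rangle|\lesssim \|f\|_{\mf{t}^{1,q}(\gg)}\|g\|_{\mf{t}^{\infty,q'}(\gg)}$.

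For the converse, given $\ell\in\mf{t}^{1,q}(\gg)^*$, I want to produce $g$ on $D$. Fix $B\in\mc{B}_5$ and consider the Hilbert-type space $L^q(T(B))$ of functions on $T(B)$ with respect to $d\gg\,dt/t$. For any $h\in L^q(T(B))$ (extended by zero), the function $h/\bigl(\gg(B)^{1/q'}\|h\|_{L^q(T(B))}\bigr)$ is a $5$-atom up to a harmless constant, so the inclusion $L^q(T(B))\hookrightarrow \mf{t}^{1,q}(\gg)$ is bounded with norm $\lesssim \gg(B)^{1/q'}$. Hence the restriction of $\ell$ is a bounded functional on $L^q(T(B))$, and by $L^q$-$L^{q'}$ duality there exists $g_B\in L^{q'}(T(B))$ with $\|g_B\|_{L^{q'}(T(B))}\lesssim \gg(B)^{1/q'}\|\ell\|$ such that $\ell(h)=\iint_{T(B)} h\,\overline{g_B}\,d\gg\,dt/t$ for all $h\in L^q(T(B))$.

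The next step is to patch the $g_B$ together. If $B_1,B_2\in\mc{B}_5$, then by uniqueness of Riesz representatives on $T(B_1)\cap T(B_2)$, the functions $g_{B_1}$ and $g_{B_2}$ agree $\gg\otimes dt/t$-a.e.\ there. Moreover, every $(y,t)\in D$ lies in $T(B(y,t))$, and $B(y,t)$ is $1$-admissible hence $5$-admissible; thus $D=\bigcup_{B\in\mc{B}_5} T(B)$ and we obtain a well-defined measurable $g$ on $D$ with $g|_{T(B)}=g_B$. The bound on $g_B$ translates directly to
\begin{equation*}
  \Big(\frac{1}{\gg(B)}\iint_{T(B)}|g|^{q'}\,d\gg\tfrac{dt}{t}\Big)^{1/q'}\lesssim \|\ell\|
\end{equation*}
uniformly in $B\in\mc{B}_5$, so $g\in \mf{t}^{\infty,q'}(\gg)$. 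Finally, since boundedly-supported functions are dense in $\mf{t}^{1,q}(\gg)$ (by the proof of Theorem \ref{atomic}) and any such function lies in $L^q(T(B))$ for some $B\in\mc{B}_5$, the identity $\ell(f)=\langle f,g\rangle$ extends from this dense class to all of $\mf{t}^{1,q}(\gg)$ by continuity.

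I expect the main obstacle to be the patching step: one must verify that the $g_B$ are genuinely consistent and that the union $\bigcup_{B\in\mc{B}_5}T(B)$ covers $D$, which rests on the admissibility of $B(y,t)$ whenever $t<m(y)$. The density step also deserves a brief remark, since the formal computation of $\langle f,g\rangle$ requires $g$ to pair meaningfully with $f$; restricting to boundedly supported $f$ (each contained in some $T(B)$) avoids any integrability issue and then density closes the argument.
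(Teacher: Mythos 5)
Your proposal follows the same route as the paper (and as the classical Coifman--Meyer--Stein argument): bound $\langle a,g\rangle$ on $5$-atoms by H\"older, then for the converse restrict $\Lambda$ to $L^q(T(B))$, use Riesz representation to get $g_B$, glue, and bound the $\mf{t}^{\infty,q'}$ norm. The structure and the key estimate $\|h\|_{\mf{t}^{1,q}(\gg)}\leq\gg(B)^{1/q'}\|h\|_{L^q(T(B))}$ match exactly.

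There is one slip in the final identification step. You assert that any boundedly supported function in $\mf{t}^{1,q}(\gg)$ lies in $L^q(T(B))$ for some $B\in\mc{B}_5$. This is false in general: a $5$-admissible ball $B$ has radius $r_B\leq 5m(c_B)$, and if $m$ is bounded (as in the Gaussian case, where $m\leq 1$) the base $B$ of the tent $T(B)$ has uniformly bounded diameter, so a boundedly supported $f$ whose spatial support is large cannot fit inside any single $T(B)$. (Moreover, boundedness of support does not by itself give $q$-integrability.) The correct way to close the argument --- and the way the paper does it --- is to note that $\Lambda$ and $\langle\cdot,g\rangle$ agree on every $5$-atom $a$, since $a\in L^q(T(B))$ for its associated $B$, and then invoke Theorem \ref{atomic} together with the already-established boundedness of $\langle\cdot,g\rangle$ to conclude that the two continuous functionals coincide on all of $\mf{t}^{1,q}(\gg)$. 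With that replacement your proof is complete.
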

   \begin{proof}
     To see that \eqref{duality} defines a bounded linear functional on $\mf{t}^{1,q}(\gamma)$  for every $g\in\mf{t}^{\infty,q^\prime}(\gamma)$, it suffices (by Theorem \ref{atomic}) to test the pairing on atoms.
    For any atom $a$ associated with a ball $B\in\mc{B}_5$ we have
     \begin{equation*}
      |\langle a , g \rangle | \leq \iint_{T(B)} |a \overline{g}| \, d\gg \frac{dt}{t}
      \leq \left( \iint_{T(B)} |a|^q \, d\gg \frac{dt}{t} \right)^{1/q}
      \left( \iint_{T(B)} |g|^{q^\prime} \, d\gg \frac{dt}{t} \right)^{1/{q^\prime}}
      \leq \| g \|_{\mf{t}^{\infty,q^\prime} (\gamma)} .
    \end{equation*}

    To show that every functional $\Lambda \in \mf{t}^{1,q^\prime}(\gamma)^*$ arises in this way, we first note that each $f\in L^q(T(B)),$\footnote{We equip the space $T(B)$ with the product measure $d\gg(y) dt/t$.} with $B\in\mc{B}_5$, satisfies
    \begin{equation*}
      \| f \|_{\mf{t}^{1,q}(\gamma)} \leq \gamma (B)^{1/{q^\prime}} \| f \|_{L^q(T(B))} .
    \end{equation*}
    Hence $\Lambda$ restricts to a bounded linear functional on $L^q(T(B))$, and is thus given by
    \begin{equation*}
      \Lambda f = \iint_{T(B)} f \overline{g_B} \, d\gg \frac{dt}{t} , \quad f\in L^q(T(B)),
    \end{equation*}
    for some $g_B \in L^{q^\prime}(T(B))$, with the estimate
    \begin{equation*}
      \| g_B \|_{L^{q^\prime}(T(B))} \leq \gg (B)^{1/{q^\prime}} \| \Lambda \|_{\mf{t}^{1,q^\prime}(\gg)^*} .
    \end{equation*}
    A single function $g$ on $D$ can then be obtained from the family $(g_B)_{B\in\mc{B}_5}$ in a well-defined manner, since for any two balls $B,B^\prime \in \mc{B}_5$, the functions $g_B$ and $g_{B'}$ agree on $T(B)\cap T(B')$. 
    It remains to be checked that $\| g \|_{\mf{t}^{\infty,q^\prime}(\gg)} \simeq \| \Lambda \|_{\mf{t}^1(\gg)^*}$. 
    On the one hand,
    for any $B\in\mc{B}_5$ we have
    \begin{equation*}
      \left( \iint_{T(B)} | g |^{q^\prime} \, d\gg \frac{dt}{t} \right)^{1/{q^\prime}}
      = \| g_B \|_{L^{q^\prime}(T(B))} \leq \gamma (B)^{1/{q^\prime}} \| \Lambda \|_{\mf{t}^{1,q^\prime}(\gg)^*} .
    \end{equation*}
    On the other hand, due to Theorem \ref{atomic}, $\| \Lambda \|_{\mf{t}^{1,q}(\gg)^*}$ is achieved (up to a constant) 
    by testing against all atoms, and so the proof is completed after checking that
    \begin{align*}
      |\Lambda a| &\leq \iint_{T(B)} |g \overline{a}| \, d\gg \frac{dt}{t} \\ 
      &\leq \left( \iint_{T(B)} |g|^{q^\prime} \, d\gg \frac{dt}{t} \right)^{1/q^\prime}
      \left( \iint_{T(B)} |a|^{q} \, d\gg \frac{dt}{t} \right)^{1/q} \\
      &\leq \gg(B)^{1/q^\prime} \norm{g}_{\mf{t}^{\infty,q^\prime}(X,\gg)} \gg(B)^{-1/q^\prime} \\
      &= \norm{g}_{t^{\infty,q^\prime}(\gg)} .
    \end{align*}
   \end{proof}

   \begin{cor}\label{interpolation}
     Suppose $X$ is complete.
     For $1\leq p_0 \leq p_1 \leq \infty$ (excluding the case $p_0 = p_1 = \infty$) and $1 \leq q_0 \leq q_1 < \infty$, we have
     $[\mf{t}^{p_0,q_0}(\gamma ) , \mf{t}^{p_1,q_1}(\gamma)]_\theta = \mf{t}^{p,q}(\gamma)$, when 
     $1/p = (1-\theta)/p_0 + \theta /p_1$, $1/q = (1-\theta)/q_0 + \theta / q_1$, and $0\leq\theta\leq 1$.
   \end{cor}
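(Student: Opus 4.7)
The plan is to bootstrap from the complex interpolation of Corollary \ref{bernal} (which requires $q_0 > 1$ and $p_1 < \infty$) to the full range stated, by handling two additional cases: $q_0 = 1$ with $p_1 < \infty$, and $p_1 = \infty$.

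For the case $q_0 = 1$, I would apply a further round of Bernal's convex reduction. The pointwise identity $\mc{A}_q(g)^q = \mc{A}_1(|g|^q)$ shows that for each $s > 0$, the nonlinear map $\Phi_s(g) := |g|^s \operatorname{sgn} g$ gives a bijection between $\mf{t}^{p,q}(\gg)$ and $\mf{t}^{p/s, q/s}(\gg)$ satisfying $\|\Phi_s(g)\|_{\mf{t}^{p/s, q/s}(\gg)} = \|g\|_{\mf{t}^{p,q}(\gg)}^s$. Following the argument of \cite{aB92}, adapted to tent spaces in \cite{aA13}, this reduction is compatible with the complex interpolation functor in the sense that
\begin{equation*}
\Phi_s\bigl([\mf{t}^{p_0, q_0}(\gg), \mf{t}^{p_1, q_1}(\gg)]_\theta\bigr) = [\mf{t}^{p_0/s, q_0/s}(\gg), \mf{t}^{p_1/s, q_1/s}(\gg)]_\theta
\end{equation*}
(with the associated rescaling of norms). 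Choosing $s \in (0,1)$ small enough that $q_0/s > 1$ rescales the interpolation problem into one already covered by Corollary \ref{bernal}, after which I invert $\Phi_s$ to recover the desired identity.

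For the case $p_1 = \infty$, I would invoke duality. By Corollary \ref{dualitythm} together with Theorem \ref{refl}, the duals of $\mf{t}^{p_0', q_0'}(\gg)$ and $\mf{t}^{1, q_1'}(\gg)$ are $\mf{t}^{p_0, q_0}(\gg)$ and $\mf{t}^{\infty, q_1}(\gg)$ respectively. Applying the duality theorem for the complex method \cite[Section 1.11]{hT78}, together with the first-step identity $[\mf{t}^{p_0', q_0'}(\gg), \mf{t}^{1, q_1'}(\gg)]_\theta = \mf{t}^{p', q'}(\gg)$, yields
\begin{equation*}
[\mf{t}^{p_0, q_0}(\gg), \mf{t}^{\infty, q_1}(\gg)]_\theta = \bigl(\mf{t}^{p', q'}(\gg)\bigr)^* = \mf{t}^{p, q}(\gg).
\end{equation*}

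The main obstacle is verifying the regularity hypotheses needed for both Bernal's reduction and the duality theorem: density of the intersection in each component, and a reflexivity (or Radon-Nikod\'ym) condition ensuring that the first and second complex methods coincide on the dual side. Density follows from the fact that boundedly supported functions lie densely in each $\mf{t}^{p,q}(\gg)$ (as used in the proof of Theorem \ref{atomic}). The delicate subcase is $p_0 = q_0 = 1$ with $p_1 = \infty$, where neither side of the duality is manifestly reflexive; this can be circumvented by applying Wolff's reiteration theorem through a reflexive intermediate space $\mf{t}^{p_*, q_*}(\gg)$ with $1 < p_*, q_* < \infty$.
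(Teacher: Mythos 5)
Your proposal follows essentially the same path as the paper: Theorem \ref{refl} and Corollary \ref{dualitythm}, supplemented by Bernal's convex reduction and reiteration. The paper's own proof is literally a one-line reference to exactly these tools, so you have correctly reconstructed the intended argument and supplied the details the paper leaves implicit.

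One point worth sharpening, however, concerns the case analysis at the end. You single out $p_0 = q_0 = 1$ with $p_1 = \infty$ as \emph{the} delicate subcase requiring Wolff reiteration. In fact the duality step requires \emph{both} $p_0 > 1$ (so that $\mathfrak{t}^{p_0,q_0}(\gg)$ is reflexive, ensuring the upper and lower complex methods coincide for the couple) \emph{and} $q_0 > 1$ (so that $q_0' < \infty$ and the dual couple $\bigl(\mathfrak{t}^{p_0',q_0'}(\gg), \mathfrak{t}^{1,q_1'}(\gg)\bigr)$ stays inside the range covered by Corollary \ref{bernal}; if $q_0 = 1$ then $q_0' = \infty$ and the first-step identity is unavailable). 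Hence the subcases $p_0 = 1$, $q_0 > 1$, $p_1 = \infty$ and $p_0 > 1$, $q_0 = 1$, $p_1 = \infty$ also fall outside the scope of the duality argument as stated, and the convex reduction is of no help there since the power map $\Phi_s$ with $s < 1$ fixes the endpoint $p_1 = \infty$. Fortunately the Wolff reiteration through a reflexive intermediate $\mathfrak{t}^{p_*,q_*}(\gg)$ that you describe handles all of these subcases uniformly, so the issue is one of precision in stating which cases require it, not a missing idea.
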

   \begin{proof}
     This follows directly from Theorem \ref{refl} and Corollary \ref{dualitythm}, by convex reduction and reiteration (see Remark \ref{postreflexive}).
   \end{proof}

   \begin{cor}\label{coa}
     Let $q \geq 1$.
     For all $1\leq p \leq q$ and $\ga \geq 1$ we have
     \begin{equation*}
       \| f \|_{\mf{t}^{p,q}_\ga (\gg)} \lesssim C_{1,\ga}^{1/q} C_{5,\ga}^{1/p - 1/q}
       \| f \|_{\mf{t}^{p,q} (\gg)} .
  \end{equation*}
   \end{cor}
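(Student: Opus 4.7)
The plan is to prove the bound at the two endpoints $p=1$ and $p=q$, and then interpolate between them.

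\emph{The endpoint $p=q$.} By Fubini, using $d(x,y)<\ga t\iff x\in B(y,\ga t)$,
\[
\|f\|_{\mf{t}^{q,q}_\ga(\gg)}^q=\iint_D|f(y,t)|^q\,\frac{\gg(B(y,\ga t))}{\gg(B(y,t))}\,d\gg(y)\,\frac{dt}{t}.
\]
Since every $(y,t)\in D$ satisfies $t<m(y)$, i.e.\ $B(y,t)\in\mc{B}_1$, Remark \ref{doublingconstant} gives $\gg(B(y,\ga t))\leq C_{1,\ga}\gg(B(y,t))$, whence $\|f\|_{\mf{t}^{q,q}_\ga(\gg)}\leq C_{1,\ga}^{1/q}\|f\|_{\mf{t}^{q,q}(\gg)}$. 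This is the claim at $p=q$, since $C_{5,\ga}^{1/q-1/q}=1$.

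\emph{The endpoint $p=1$.} By Theorem \ref{atomic} it suffices to bound $\|a\|_{\mf{t}^{1,q}_\ga(\gg)}$ uniformly over $5$-atoms $a$ supported in $T(B)$, $B\in\mc{B}_5$. If $(y,t)\in T(B)\cap\gG_\ga(x)$, then $B(y,t)\subset B$ (so $t\leq r_B$ and $y\in B$) and therefore $d(x,c_B)\leq\ga t+r_B\leq(\ga+1)r_B$; thus $\mc{A}_q^\ga a$ is supported in $(\ga+1)B$. H\"older's inequality combined with the Fubini computation of the previous paragraph, applied over $T(B)$ in place of $D$, gives
\[
\|a\|_{\mf{t}^{1,q}_\ga(\gg)}\leq\gg((\ga+1)B)^{1/q'}\|\mc{A}_q^\ga a\|_{L^q(\gg)}\leq\gg((\ga+1)B)^{1/q'}C_{1,\ga}^{1/q}\gg(B)^{-1/q'}.
\]
The $5$-admissibility of $B$ together with Remark \ref{doublingconstant} yields $\gg((\ga+1)B)\lesssim C_{5,\ga}\gg(B)$, so $\|a\|_{\mf{t}^{1,q}_\ga(\gg)}\lesssim C_{1,\ga}^{1/q}C_{5,\ga}^{1/q'}$, which is the claim at $p=1$.

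\emph{Interpolation.} The identity map $I:\mf{t}^{p,q}(\gg)\to\mf{t}^{p,q}_\ga(\gg)$ is linear and bounded at $p=1$ and $p=q$ with the constants just established. Corollary \ref{bernal} (applied at aperture $1$ for the source and at aperture $\ga$ for the target, the latter by the same proof) provides the complex interpolation identities $[\mf{t}^{1,q}(\gg),\mf{t}^{q,q}(\gg)]_\theta=\mf{t}^{p,q}(\gg)$ and $[\mf{t}^{1,q}_\ga(\gg),\mf{t}^{q,q}_\ga(\gg)]_\theta=\mf{t}^{p,q}_\ga(\gg)$, where $1/p=1-\theta+\theta/q$. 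Interpolating $I$ then produces the bound with constant
\[
\bigl(C_{1,\ga}^{1/q}C_{5,\ga}^{1/q'}\bigr)^{1-\theta}\bigl(C_{1,\ga}^{1/q}\bigr)^\theta=C_{1,\ga}^{1/q}C_{5,\ga}^{(1-\theta)/q'},
\]
and the algebraic identity $(1-\theta)/q'=1/p-1/q$ finishes the proof. The main subtle point is the bookkeeping with doubling constants in the $p=1$ step, where one must keep separate the $1$-admissibility of $B(y,t)$ needed for Fubini and the $5$-admissibility of the atomic ball $B$ needed to expand to $(\ga+1)B$; the case $q=1$ forces $p=1$ and reduces to the first step alone.
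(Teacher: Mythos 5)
Your proof follows the same route as the paper: the $p=q$ case by a Fubini computation, the $p=1$ case via the atomic decomposition of Theorem \ref{atomic}, and interpolation using Corollary \ref{bernal}. The $p=q$ step and the interpolation step are carried out correctly and match the paper's.

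There is a small but genuine gap in the constant bookkeeping at $p=1$. You show that $\mc{A}_q^\ga a$ is supported in $(\ga+1)B$ and then assert $\gg((\ga+1)B)\lesssim C_{5,\ga}\gg(B)$. Remark \ref{doublingconstant} actually gives $\gg((\ga+1)B)\leq C_{5,\ga+1}\gg(B)$, and $C_{5,\ga+1}$ is \emph{not} comparable to $C_{5,\ga}$ uniformly in $\ga$ (in the Gaussian setting $C_{5,\ga}\eqsim e^{c\ga^2}$, so the ratio $C_{5,\ga+1}/C_{5,\ga}\eqsim e^{c(2\ga+1)}$ grows without bound). Your argument therefore only establishes the bound with $C_{5,\ga+1}^{1/p-1/q}$ in place of $C_{5,\ga}^{1/p-1/q}$. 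The paper avoids this by the sharper observation that $\Gamma_\ga(x)\cap T(B)\neq\emptyset$ forces $x\in\ga B$ rather than $(\ga+1)B$: from $B(y,t)\subset B$ one deduces (in a geodesic setting) $d(y,c_B)\leq r_B-t$ rather than merely $d(y,c_B)<r_B$, and then
\begin{equation*}
  d(x,c_B) < \ga t + (r_B - t) = (\ga-1)t + r_B \leq \ga r_B
\end{equation*}
since $t\leq r_B$ and $\ga\geq 1$. Replacing $(\ga+1)B$ by $\ga B$ in your estimate recovers exactly the stated constant $C_{1,\ga}^{1/q}C_{5,\ga}^{1/p-1/q}$; otherwise the structure of your argument is sound.
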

   \begin{proof}
     In order to argue by interpolation, consider first the case $p=q$:
    \begin{align*}
      \| f \|_{\mf{t}^{q,q}_\ga (\gg)}^q &= \int_X \iint_{\Gamma^\ga (x)} |f(y,t)|^q
      \, \frac{d\gg(y)}{\gg(B(y,t))} \frac{dt}{t} \, d\gg(x) \\
      &= \int_X \int_0^\infty |f(y,t)|^q \mb{1}_{(0,m(y))}(t) \frac{\gg (B(y,\ga t))}{\gg (B(y,t))}
      \,\frac{dt}{t} \,d\gg (y)\\
      &\leq C_{1,\ga} \iint_D |f(y,t)|^q \, d\gg(y) \frac{dt}{t} \\
      &= C_{1,\ga} \| f \|_{\mf{t}^{q,q} (\gg)}^q .
    \end{align*}
     For $p=1$ we make use of the atomic decomposition.
     If $a$ is a $5$-atom associated with $B\in\mc{B}_5$,
     then, since $\Gamma_\ga (x) \cap T(B)$ is non-empty exactly when $x\in\ga B$, we have
    \begin{align*}
      \| a \|_{\mf{t}^{1,q}_\ga (\gg)} &\leq \gg (\ga B)^{1/{q^\prime}} \| a \|_{\mf{t}^{q,q}_\ga (\gg)} \\
      &\leq C_{1,\ga}^{1/q} \gg (\ga B)^{1/q^\prime} \| a \|_{\mf{t}^{q,q} (\gg)} \\
      &\leq C_{1,\ga}^{1/q} \left( \frac{\gg (\ga B)}{\gg (B)} \right)^{1/q^\prime} \\
      &\leq C_{1,\ga}^{1/q} C_{5,\ga}^{1-1/q} .
    \end{align*}
    Thus $\| f \|_{\mf{t}^{1,q}_\ga (\gg)} \leq C_{1,\ga}^{1/q} C_{5,\ga}^{1-1/q} \| f \|_{\mf{t}^{1,q} (\gg)}$ for all $f\in\mf{t}^{1,q} (\gg)$, and the result then follows by interpolation.
   \end{proof}

\begin{rmk}
  Note that on $(\RR^n, dx)$ this gives the optimal dependence on $\ga$ for $1\leq p \leq 2$, which we could not obtain from the vector-valued approach, since $C_{1,\ga}^{1/2} C_{5,\ga}^{1/p - 1/2} = \ga^{n/p}$ (see Remark \ref{postreflexive}).
  On Gaussian $\RR^n$ this merely extends the aperture change to $\mf{t}^1(\gg)$ with the constant $e^{c\ga^2}$, the improvement from interpolation being immaterial.
\end{rmk}

\begin{appendix}

  \section{Local maximal functions}\label{lmf}

  Here we present a brief justification of the boundedness of the maximal functions used above and in Appendix \ref{cone}.
  We use dyadic methods, particularly the existence of finitely many `adjacent' dyadic systems, combined with some methods from Martingale theory.
  At the end of this section we indicate another approach, which is more elementary but does not adapt well to vector-valued contexts.

  By a \emph{dyadic system} on a measure space $(X,\gg)$ we mean a countable collection $\mc{D} = \{\mc{D}_k\}_{k \in \ZZ}$, where each $\mc{D}_k$ is a partition of $X$ into measurable sets of finite nonzero measure, such that the containment relations
  \begin{equation*}
    Q\in\mc{D}_k , \quad R\in\mc{D}_l , \quad l\geq k \quad \Longrightarrow
    \quad R\subset Q \quad \text{or} \quad Q\cap R = \emptyset
  \end{equation*}
  hold.
  The elements of $\mc{D}_k$ are called \emph{dyadic cubes (of generation $k$)}.

  Associated to each dyadic system $\mc{D}$ is a \emph{dyadic maximal function}, defined by
  \begin{equation*}
    M_\mc{D}u(x) = \sup_{\substack{Q\ni x \\ Q\in\mc{D}}} \fint_Q |u| \,d\gg
  \end{equation*}
  for all $u \in L_\text{loc}^1(\gg)$.
  Since $M_\mc{D}$ coincides with the martingale maximal operator for the (increasing) filtration $(\mc{F}_k)_{k\in\ZZ}$ 
  when each $\mc{F}_k$ is the $\sigma$-algebra generated by $\mc{D}_k$, it follows that
  $M_\mc{D}$ satisfies a weak type-(1,1) inequality
  \begin{equation}\label{wt11}
    \gg (\{ x\in X : M_\mc{D}u(x) > \lambda \} ) \leq \frac{1}{\lambda} \| u \|_{L^1(\gg)}
  \end{equation}
  for all $\lambda > 0$ (see for instance \cite[Theorem 14.6]{dW91} or \cite[Chapter IV, Section 1]{emS70}). 

  Now suppose that $(X,d)$ is a geometrically doubling metric space.
  Hyt\"onen and Kairema showed in \cite{HK12} (see also \cite{tM03}) the existence of a finite collection of \emph{adjacent dyadic systems}.

  \begin{thm}
    \label{adjacent}
    There exists a finite collection $\{\mc{D}_i\}_{i=1}^{N}$ of dyadic systems on $X$, with $N$ bounded by a constant depending only on the geometric doubling constant of $(X,d)$, such that every open ball $B\subset X$ is contained in a dyadic cube $Q_B$ from one of the dyadic systems, with $\diam(Q_B) \leq c_X \diam(B)$.
  \end{thm}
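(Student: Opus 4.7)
The plan is to reproduce the construction of Hyt\"onen and Kairema, whose proof reduces to (i) building one dyadic system over a geometrically doubling space, and (ii) running the construction with $N$ different seeds chosen so that every ball falls strictly inside some cube of some seed.

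For (i), I would fix $\delta \in (0,1)$ small enough (depending on the geometric doubling constant), and recursively pick maximal $\delta^k$-separated nets $Z_k = \{z_\alpha^k : \alpha \in A_k\} \subset X$ with $Z_k \subset Z_{k+1}$. Assigning each $z_\beta^{k+1}$ a unique ``parent'' $z_\alpha^k \in Z_k$ at distance $< \delta^k$ gives a tree structure on $\bigsqcup_k A_k$. The dyadic cubes $Q_\alpha^k$ are then defined, following Christ and David, as the union (over descendants $\beta$ of $\alpha$) of Voronoi-type cells around $z_\beta^\ell$ for $\ell$ large. A standard argument using maximality and the separation of the nets shows
\begin{equation*}
B(z_\alpha^k, c_1 \delta^k) \subset Q_\alpha^k \subset B(z_\alpha^k, c_2 \delta^k),
\end{equation*}
for geometric constants $c_1, c_2$, and that the resulting $\{Q_\alpha^k\}$ form a dyadic system in the sense defined above. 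This part is standard and the main verifications are the nesting $R \subset Q$ or $R \cap Q = \emptyset$ and the inner/outer radius bounds.

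For (ii), the adjacency property, I would run the above construction $N$ times with different families of nets $Z_k^{(i)}$, $i = 1,\ldots, N$, chosen so that their ``phases'' are sufficiently spread out. Concretely, it suffices to arrange that for every ball $B$ of radius $r$, if we set $k$ with $\delta^{k+1} \leq r < \delta^k$, then some seed $i$ has a net point $z_\alpha^{k-L} \in Z_{k-L}^{(i)}$ (for a fixed $L$ depending on $\delta$ and $c_2$) within distance $\leq \tfrac12 (c_1 \delta^{k-L} - r)$ of $c_B$; then $B \subset Q_\alpha^{k-L}$, whose diameter is $\lesssim \delta^{k-L} \simeq r$. Ensuring such a seed exists for every possible ``offset'' of $B$ amounts to covering a ball of radius $\sim \delta^{k-L}$ by $N$ points; geometric doubling gives a uniform bound on the required $N$.

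The main obstacle is arranging the seeds so that this covering property holds simultaneously for every scale $k$ with a single constant $N$; this requires carefully correlating the choices across scales. One clean solution is to pick the nets $Z_k^{(i)}$ via a random construction (each point independently, uniformly from a small ball around some base point) and compute the probability that a given ball is ``badly positioned'' in every seed. Geometric doubling bounds the number of balls at each scale whose position matters, and a union bound then shows that finitely many seeds suffice deterministically. Once $N$ is thus controlled by the geometric doubling constant, the factor $c_X = c_2 \delta^{-L}$ is explicit and one obtains $\diam(Q_B) \leq c_X \diam(B)$ as claimed.
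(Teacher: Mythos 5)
The paper does not actually prove this theorem; it cites Hyt\"onen and Kairema \cite{HK12} (together with the single-system construction of Christ/David). Your part (i) — nested maximal $\delta^k$-nets and the Christ--David cubes satisfying $B(z_\alpha^k, c_1\delta^k) \subset Q_\alpha^k \subset B(z_\alpha^k, c_2\delta^k)$ — is standard and correct, so there is nothing to object to there.

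The genuine gap is in the derandomization at the end of part (ii). First, choosing "each point independently, uniformly from a small ball" is incompatible with the nesting $Z_k^{(i)} \subset Z_{k+1}^{(i)}$ that the Christ--David machinery requires; if you instead use a single random shift per seed acting coherently across scales, the independence underlying your failure-probability computation disappears. Second, and more fundamentally, the union bound does not close. For any fixed ball $B$, the probability that $B$ is badly positioned in all $N$ independently chosen seeds is a constant $p^N > 0$ that does not decay with the radius of $B$. Geometric doubling bounds the number of essentially distinct ball-positions \emph{at a single scale}, but there are infinitely many scales, so $\sum_B p^N$ diverges and the union bound gives no conclusion; in particular it cannot yield an $N$ depending only on the geometric doubling constant, which is precisely what the theorem asserts. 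The Hyt\"onen--Kairema proof avoids probability entirely: a single reference net is fixed, the adjacent systems are indexed by finitely many "selection labels" deciding consistently at every scale which candidate centre absorbs which finer-scale points, and the bound on the number of labels comes from a pigeonhole/combinatorial argument controlled by the doubling constant and the ratio $\delta$. Reproducing that deterministic construction — or, in $\RR^n$, the classical one-third-shift trick that it generalizes — is the way to fill the gap; a probabilistic proof would need a far more careful dependency structure across scales than a naive union bound.
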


  Now let $(X,d,\gm)$, $\gg$, and $m$ be as in Section \ref{basicsetting}, and let $\ga > 0$.
  Combining the theorem above with the weak type-(1,1) estimate for the dyadic maximal function yields a corresponding weak type-(1,1) estimate for the $\ga$-local maximal operator $M_\ga$.

  Indeed, for each $\ga$-admissible ball $B \in \mc{B}_\ga$ we have that $B \subset Q_B$ for some dyadic cube $Q_B$ that satisfies $Q_B \subset c_X B$, and so
  \begin{align*}
    \mb{1}_B(x) \fint_B |u| \, d\gg &\leq \mb{1}_{Q_B}(x) \frac{\gg(Q_B)}{\gg(B)} \fint_{Q_B} |u| \, d\gg \\
    &\leq \mb{1}_{Q_B}(x) \frac{\gg(c_X B)}{\gg(B)} \fint_{Q_B} |u| \, d\gg \\
    &\leq \mb{1}_{Q_B}(x) C_{\ga,c_X} \fint_{Q_B} |u| \; d\gg.
  \end{align*}
  Here $C_{\ga,c_X}$ is the doubling constant from Remark \ref{doublingconstant}.
  Summing over finitely many dyadic systems, we find that
  \begin{equation*}
    M_\ga u(x) \leq C_{\ga,c_X} \sum_{\mc{D}} M_\mc{D} u(x),
  \end{equation*}
  and using the estimate \eqref{wt11} yields
  \begin{equation*}
    \gg(\{x \in X : M_\ga u(x) > \gl \}) \lesssim C_{\ga,c_X} \norm{u}_{L^1(\gg)}
  \end{equation*}
  for all $\gl > 0$.

  Similarly, given a $\gs$-finite measure space $\Sigma$, we can consider the $\ga$-local 
  lattice maximal operator $\mc{M}_\ga$, given by
  \begin{equation*}
    \mc{M}_\ga U(x,s) = \sup_{\substack{B\in\mc{B}_\ga \\ B\ni x}} \fint_B |U(z,s)| \,d\gg (z)
  \end{equation*}
  for $U \in L_\text{loc}^1(\gg;L^q(\gS))$ with $q \in (1,\infty)$ (see \cite{RdF86} for a general overview).
  Again, this is controlled pointwise by a finite sum of its dyadic counterparts, that is,
  \begin{equation}\label{dyadiccontrol}
    \mc{M}_\ga U(x,s) \leq C_{\ga , c_X} \sum_\mc{D} \mc{M}_\mc{D}U(x,s)
  \end{equation}
  for some finite collection of dyadic systems $\mc{D}$.
  The dyadic lattice maximal operators $\mc{M}_\mc{D}$ are again amenable to Martingale theory. Indeed, 
  according to the martingale version of Fefferman--Stein inequality (see \cite[Subsection 3.1]{MT00}) 
  we have for $1 < p < \infty$ that
  \begin{equation*}
    \| \mc{M}_\mc{D} U \|_{L^p(\gg ; L^q(\Sigma))} \lesssim_{p,q} \| U \|_{L^p(\gg ; L^q(\Sigma))},
  \end{equation*}
  and consequently
  \begin{equation*}
    \| \mc{M}_\ga U \|_{L^p(\gg ; L^q(\Sigma))} \lesssim_{p,q} C_{\ga , c_X} \| U \|_{L^p(\gg ; L^q(\Sigma))} .
  \end{equation*} 
  Although the explicit statement in \cite{MT00} concerns the case of sequences, i.e. the case $\Sigma = \NN$,
  it immediately extends to more general measure spaces $\Sigma$ by means of \emph{lattice finite representability}:
  $L^q(\Sigma)$ is lattice finitely representable in $\ell^q$ in the sense that for every
  finite dimensional sublattice $E$ of $L^q(\Sigma)$ and every $\varepsilon > 0$ there exists a
  sublattice $F$ of $\ell^q$ and a lattice isomorphism $\Phi : E\to F$ for which 
  $\| \Phi \| \| \Phi^{-1} \| \leq 1 + \varepsilon$ (see for instance \cite{GPLMR01} and the references therein). 
  For boundedness of $\mc{M}_\mc{D}$ it suffices to consider
  simple functions $U : X \to L^q(\Sigma)$ and the boundedness is therefore transferable in lattice finite
  representability.

  \begin{rmk}\label{locvitali}
    Martingale theory can be avoided by analysing $M_\ga$ by means of a `local Vitali covering lemma', analogous to the usual analysis of the (global) maximal operator through the usual Vitali covering lemma.
    One can then prove the duality of $\mf{t}^{p,q}_\ga$ and $\mf{t}^{p^\prime,q^\prime}_\ga$ for $1 < p,q < \infty$, and recover the boundedness of the projections $N_\ga$ by realising them as the adjoints of the (bounded) inclusions from $\mf{t}^{p,q}_\ga$ into the appropriate $L^q$-valued $L^p$-space.
    This is the method of Bernal \cite{aB92}, used by the first author for global tent spaces in \cite{aA13}.
    In this way we also avoid the use of the $L^q(\gS)$-valued maximal function $\mc{M}_\ga$, but we do not achieve the potential generality of the above method.
  \end{rmk}

\section{Cone covering lemma for non-negatively curved Riemannian manifolds}\label{cone}

In this section we prove a stronger version of Lemma \ref{pointwise2} that will be useful for the theory
of vector-valued tent spaces. This is based on a `cone covering lemma', the Euclidean version of which appears in
\cite[Lemma 10]{mK11}.

\subsection{Review of non-negatively curved spaces}

Recall that a complete length space $(X,d)$ has \emph{non-negative curvature} if and only if for every point $x \in X$ and for every pair of geodesics $\gr_1, \gr_2$ with $\gr_1(0) = \gr_2(0) = x$, the comparison angle\footnote{This is the corresponding angle of a Euclidean triangle with sidelengths $d(x,\gr_1(t))$, $d(x,\gr_2(t))$, and $d(\gr_1(t),\gr_2(t))$.}
\begin{equation*}
  \angle \gr_1(t) x \gr_2 (t) := \cos^{-1} \left( \frac{d(x,\gr_1(t))^2 + d(x,\gr_2(t))^2 - d(\gr_1(t),\gr_2(t))}{2d(x,\gr_1(t))d(x,\gr_2(t))} \right)
\end{equation*}
is nonincreasing in $t$.
Actually, this is a combination of the usual (local) definition of non-negative curvature and the conclusion of Topogonov's theorem: see \cite[Definition 4.3.1 and Theorem 10.3.1]{BBI01} for details. 

We have the following simple corollary of this characterisation of non-negative curvature.

\begin{cor}\label{topcor}
  Suppose $(X,d)$ is a complete length space with non-negative curvature.
  Let $x,y,z \in X$, let $\gr_{xy}$ and $\gr_{xz}$ be two unit speed minimising geodesics from $x$ to $y$ and $z$ respectively, and denote the angle $\angle(\gr_{xy}^\prime(0),\gr_{xz}^\prime(0))$ by $\gq$.
  Then
  \begin{equation*}
    d(y,z) \leq d(x,z) \tan \gq.
  \end{equation*}
\end{cor}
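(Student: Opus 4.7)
The plan is the standard comparison-triangle reduction afforded by non-negative curvature. I would first construct the Euclidean comparison triangle $\bar x, \bar y, \bar z \in \RR^2$ with the same pairwise distances as $x, y, z$ in $X$, and denote by $\bar\gq$ its angle at $\bar x$. Since $d(y,z) = d(\bar y, \bar z)$ and $d(x,z) = d(\bar x, \bar z)$ by construction, and since $\tan$ is monotone on $[0,\pi/2)$, the corollary reduces to two ingredients: the angle comparison $\bar\gq \leq \gq$ and the Euclidean inequality $d(\bar y, \bar z) \leq d(\bar x, \bar z)\tan\bar\gq$.

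For the angle comparison I would invoke the characterisation of non-negative curvature stated immediately before the corollary. Applied to $\gr_1 = \gr_{xy}$ and $\gr_2 = \gr_{xz}$, the nonincreasing property of $\angle\gr_1(t) x \gr_2(t)$ together with $\angle\gr_1(t) x \gr_2(t) \to \gq$ as $t \to 0^+$ handles the case of equal-length sides, and the general mixed-distance version is the content of Toponogov's theorem, which is built into the working definition the author adopts (see \cite[Definition 4.3.1 and Theorem 10.3.1]{BBI01}).

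The Euclidean inequality is then plane trigonometry: I would drop the perpendicular from $\bar y$ onto the line through $\bar x$ and $\bar z$ and read off the bound on $d(\bar y, \bar z)$ from the resulting right triangle with angle $\bar\gq$ at $\bar x$. The main technical care I expect to need is in locating the foot of this perpendicular and identifying the correct trigonometric relation so that the bound takes exactly the stated form $d(\bar x, \bar z)\tan\bar\gq$; once that is done, chaining the two ingredients delivers $d(y,z) = d(\bar y, \bar z) \leq d(\bar x, \bar z)\tan\bar\gq \leq d(x,z)\tan\gq$, as required.
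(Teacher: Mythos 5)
Your plan mirrors the paper's proof exactly: reduce to a Euclidean comparison triangle via Toponogov, then finish with plane trigonometry. The Toponogov half is fine. The trouble is the Euclidean half, which you flag as needing ``technical care'' without resolving: the inequality
$d(\bar y,\bar z)\le d(\bar x,\bar z)\tan\bar\gq$
is simply \emph{false} for an arbitrary Euclidean triangle. Dropping a perpendicular from $\bar y$ to the line $\bar x\bar z$ gives you $d(\bar x,\bar y)\sin\bar\gq$ as the distance from $\bar y$ to that line, which says nothing about $d(\bar y,\bar z)$ unless $\bar z$ happens to be the foot. Concretely, place $\bar z$ very near $\bar x$ and $\bar y$ far away; then $d(\bar x,\bar z)\tan\bar\gq$ is tiny while $d(\bar y,\bar z)$ is large. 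Working through the law of cosines, the bound $d(\bar y,\bar z)\le d(\bar x,\bar z)\tan\bar\gq$ is equivalent to
$d(\bar x,\bar z)\,\cos 2\bar\gq/\cos\bar\gq \le d(\bar x,\bar y) \le d(\bar x,\bar z)/\cos\bar\gq$,
so the side ratio $d(x,y)/d(x,z)$ must lie in a window around $1$.

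The paper's own proof shares this defect: it asserts $\tan\gq' = d(y,z)/d(x,z)$, which holds only when the comparison triangle has a right angle at $\bar z$. As stated, the corollary is not true for arbitrary $y,z$. What saves the paper is that it is only ever applied with $y=\gr_1(t)$, $z=\gr_2(t)$ along two geodesics from $x$ at the same parameter, i.e.\ the (near-)isoceles case $d(x,y)=d(x,z)$; there $d(\bar y,\bar z)=2d(\bar x,\bar z)\sin(\bar\gq/2)\le d(\bar x,\bar z)\bar\gq\le d(\bar x,\bar z)\tan\bar\gq$, and the chain $d(y,z)\le d(x,z)\tan\gq'\le d(x,z)\tan\gq$ goes through. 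So either add the hypothesis $d(x,y)\le d(x,z)\sec\gq$ (or just $d(x,y)=d(x,z)$) to the corollary, or prove the weaker bound $d(y,z)\le d(x,z)\tan\gq$ only for points taken at equal arclength along the two geodesics, which is all that is used afterwards. Your ``technical care'' instinct was right; it is a hypothesis that is missing, not a clever foot of a perpendicular.
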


\begin{proof}
  We have
  \begin{equation*}
    \gq = \lim_{t \to 0} \angle(\gr_{xy}^\prime(t),\gr_{xz}^\prime(t)) \geq \gq^\prime
  \end{equation*}
  by Topogonov's theorem (as stated above), where $\gq^\prime$ is the comparison angle $\wtd{\angle}yxz$.
  By basic trigonometry,
  \begin{equation*}
    \tan \gq^\prime = \frac{d(y,z)}{d(x,z)},
  \end{equation*}
  and so we have
  \begin{equation*}
    \tan \gq \geq \frac{d(y,z)}{d(x,z)}.
  \end{equation*}
  This yields the result.
\end{proof}

In particular, if $\gr_1$ and $\gr_2$ are two unit speed geodesics emanating from a point $x \in X$ with $\angle(\gr_1^\prime(0),\gr_2^\prime(0)) \leq \tan^{-1}(1/4)$, then
\begin{equation*}
  d(\gr_1(t),\gr_2(t)) \leq t/4
\end{equation*}
for all $t > 0$, since $d(\gr_2(0),\gr_2(t)) \leq t$.

\subsection{Cone covering}

In this section, we assume that $X$ is a complete geometrically doubling Riemannian manifold, so that $(X,d)$ is a complete length space.
We also fix $\gfv$ and $m$ satisfying condition (A) as in Section \ref{basicsetting} and assume in addition the following
comparability condition:
\begin{description}
\item[(C)] For every $\ga > 0$, there exists a constant $c_\ga$ such that for all pairs of points $x,y \in X$,
  \begin{equation*}
    \quad d(x,y) \leq \ga m(x) \implies m(x) \leq c_\ga m(y).
  \end{equation*}
\end{description}

\begin{rmk}
  We could work in the context of complete geometrically doubling non-negatively curved length spaces; we have imposed smooth structure in order to use the language of tangent spaces rather than that of spaces of directions.
  The length space setting is only a small generalisation of the manifold setting, due to the fact that complete non-negatively curved length spaces are manifolds almost everywhere.
\end{rmk}

Given parameters $\ga\geq 1$ and $\gl \in (0,1)$, we define the extension of an open set $E \subset X$ by
\begin{equation*}
  E_{\ga,\gl}^* := \bigcup \Big\{ B\in\mc{B}_\ga : \frac{\gg(B \cap E)}{\gg(B)} > \gl \Big\} .
\end{equation*}
Note that we can write
\begin{equation*}
  E_{\ga,\gl}^* = \{x \in X : M_\ga \mb{1}_E (x) > \gl\},
\end{equation*}
where $M_\ga$ is the $\ga$-local maximal operator from Appendix \ref{lmf}, and so 
$E_{\ga,\gl}^*$ is open.
Furthermore, since for each $\ga \geq 1$ the local maximal function is of weak type $(1,1)$ with respect to $\gg$, we have
\begin{equation*}
  \gg(E_{\ga,\gl}^*) \leq \frac{C_\ga}{\gl}\gg(E)
\end{equation*}
for all $\gl \in (0,1)$.

For all $x \in X$, for all unit tangent vectors $v \in T_x X$ (recalling that we have assumed that $X$ is a manifold), and for all $t > 0$, define the \emph{sector}
\begin{equation*}
  R(v,t) := \bigcup_{0 \leq s \leq t} B(\gr (s),s/4)
\end{equation*}
opening from $x$ in the direction of $v$ along the unit speed geodesic $\gr$ with 
$\gr'(0) = v$.

\begin{lem}\label{lda}
  Let $\gb \geq 1$.
  There exists $\ga \geq 1$ and $\gl \in (0,1)$ 
  such that the following holds: if $E \subset X$ is open and 
  $y \in R(v,t) \subset E$, with $v\in T_xX$ and 
  $0 < t \leq \gb m(x)$, then $B(y,2t) \subset E_{\ga,\gl}^*$.
\end{lem}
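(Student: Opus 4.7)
The plan is to cover every $z\in B(y,2t)$ by a single admissible ball centred at $x$, and to show that a fixed fraction of that ball already lies inside the sector $R(v,t)\subset E$. This is more direct than constructing balls tailored to each individual $z$, and it makes the choice of $\ga$ and $\gl$ transparent.

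First I would verify the inclusion $R(v,t)\subset B(x,5t/4)$, which follows from the triangle inequality and the fact that $\gr$ has unit speed: for each $s\in[0,t]$, the ball $B(\gr(s),s/4)$ lies inside $B(x,5s/4)$. Consequently, any $z\in B(y,2t)$ satisfies $d(x,z)\leq d(x,y)+d(y,z)\leq 5t/4+2t<4t$. Setting $\ga:=4\gb$, the hypothesis $t\leq \gb m(x)$ immediately gives $B:=B(x,4t)\in\mc{B}_\ga$, and $z\in B$.

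Next I would estimate $\gg(B\cap E)/\gg(B)$ from below by a constant depending only on $\gb$ and the constants from conditions (A) and (C). Since $R(v,t)\subset E$ and $R(v,t)\subset B$, we have $\gg(B\cap E)\geq \gg(R(v,t))\geq \gg(B(\gr(t),t/4))$, the latter inequality coming from the $s=t$ term in the union defining $R(v,t)$. To compare $\gg(B(\gr(t),t/4))$ with $\gg(B)$, note that $B\subset B(\gr(t),5t)=20\cdot B(\gr(t),t/4)$. Condition (C) applied to $d(x,\gr(t))=t\leq \gb m(x)$ gives $m(x)\leq c_\gb m(\gr(t))$, hence $t/4\leq(\gb c_\gb/4)m(\gr(t))$; thus $B(\gr(t),t/4)\in\mc{B}_{\gb c_\gb/4}$, and Remark~\ref{doublingconstant} yields $\gg(B)\leq C_{\gb c_\gb/4,\,20}\,\gg(B(\gr(t),t/4))$. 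Chaining these bounds gives $\gg(B\cap E)/\gg(B)\geq 1/C_{\gb c_\gb/4,\,20}$, so taking $\gl:=(2C_{\gb c_\gb/4,\,20})^{-1}$ finishes the proof.

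I do not expect a serious obstacle here. The argument is largely bookkeeping of doubling constants; the one conceptual move is to realise that the single ball $B(x,4t)$ simultaneously covers all of $B(y,2t)$ and captures the whole sector, so the local maximal function $M_\ga\mb{1}_E$ is bounded below uniformly on $B(y,2t)$. The non-negative curvature assumption of the section plays no role in this preparatory step, and will presumably enter only in the subsequent cone covering lemma itself, where the shapes of several overlapping sectors must be controlled using Corollary~\ref{topcor}.
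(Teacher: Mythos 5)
Your proof is correct and follows essentially the same approach as the paper: both compare $\gg(B\cap E)$ from below with $\gg(B(\gr(t),t/4))$ and from above using condition (C) together with the local doubling condition (A). The only cosmetic difference is your choice of test ball $B(x,4t)$ (giving $\ga=4\gb$ directly from $t\leq\gb m(x)$) instead of the paper's $B(y,2t)$ (which needs one more application of (C) to get $\ga=2\gb c_{2\gb}$); note also that $d(x,\gr(t))\leq t$ rather than $=$, since $\gr$ need not remain minimising, but this is all you use anyway.
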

\begin{proof}
  Suppose that $E \subset X$ is open and $y \in R(v,t) \subset E$, with $v\in T_xX$ and $0 < t \leq \gb m(x)$.
  We search for $\ga$ and $\gl$ so that 
  \begin{equation*}  
    B(y,2t)\in \mc{B}_\ga \quad \text{and} \quad
    \frac{\gg (B(y,2t)\cap E)}{\gg (B(y,2t))} > \gl .
  \end{equation*}
  Denote by $\gr$ the unit speed geodesic determined by $v$ and begin by observing that $B(\gr (t),t/4) \subset R(v,t) \subset B(y,2t)\cap E$, while $B(y,2t)\subset B(\gr (t),4t)$, so that
  \begin{equation*}
    \frac{\gamma (B(y,2t)\cap E)}{\gamma (B(y,2t))} 
    \geq \frac{\gamma (B(\gr (t),t/4))}{\gamma (B(\gr (t),4t))} .
  \end{equation*}
  Now $d(x,\gr (t)) \leq t \leq \gb m(x)$, and by (C) we have $m(x) \leq c_\gb m(\gr (t))$, so $t \leq \gb m(x)\leq \gb c_\gb m(\gr (t))$.
  This means that $B(\gr (t),t/4)$ is $\gb c_{\gb} / 4$-admissible, so that by (A),
  \begin{equation*}
    \gg (B(\gr (t),4t)) \leq A_\gb \gg \left(B\left(\gr (t),\frac{t}{4}\right)\right)
  \end{equation*}
  for some constant $A_\gb$.
  We may now choose $\gl < 1/A_\gb$ to get
  \begin{equation*}
    \frac{\gamma (B(y,2t)\cap E)}{\gamma (B(y,2t))} > \lambda .
  \end{equation*}
  To choose $\ga$, note that since $d(x,y) \leq 2t \leq 2\gb m(x)$, we have $m(x) \leq c_{2\gb}m(y)$, and so $t\leq \gb c_{2\gb} m(y)$.
  In order to have $B(y,2t)\in\mc{B}_\ga$, we choose $\ga = 2\gb c_{2\gb}$.
  By the definition of the extension, we now have $B(y,2t)\subset E^*_{\ga,\gl}$.
\end{proof}

Dictated by the last paragraph in the proof of the following lemma, we now fix $\beta = c_1$, and choose $\ga$ and $\gl$ in accordance with Lemma \ref{lda}.
We also write $E^* = E_{\ga,\gl}^*$.
Recall that the admissible tent $T(O)$ over an open set $O \subset X$ is given by
\begin{equation*}
  T(O) := D \sm \gG(O^c),
\end{equation*}
where $\gG(O^c) := \cup_{x \in O^c} \gG(x)$.

\begin{lem}[Cone covering lemma]\label{cclem}
  Assume that $X$ is non-negatively curved, and let $E \subset X$ be a bounded open set.
  Then for every $x \in E$ there exist finitely many points $x_1, \ldots, x_N \in X\sm E$, with $N$ depending only on the dimension of $X$, such that
  \begin{equation*}
    \gG(x) \sm T(E^*) \subset \bigcup_{m=1}^N \gG(x_m).
  \end{equation*}
\end{lem}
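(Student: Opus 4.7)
The plan is to cover the unit sphere $S_x \subset T_xX$ by finitely many angular balls of opening angle $\theta_0 = \tan^{-1}(1/4)$, and for each representative direction $v_m$ to follow the corresponding geodesic ray from $x$ until its sector exits $E$, using the endpoint (perturbed so as to lie in $X \setminus E$) as $x_m$. The two key tools are Lemma \ref{lda}, which forces any sector contained in $E$ to produce a doubled ball inside $E^*$, and Corollary \ref{topcor}, whose non-negative curvature hypothesis bounds the divergence rate of two unit-speed geodesics emanating from $x$.

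\textbf{Setup.} Cover $S_x$ by $N$ angular balls of radius $\theta_0$ (with $N$ depending only on $\dim X$) and pick representatives $v_m$. Let $\rho_m$ be the unit-speed geodesic from $x$ with $\rho_m'(0) = v_m$ (which exists by Hopf--Rinow), and set
\begin{equation*}
t_m := \sup\{s > 0 : R(v_m, s) \subset E\}.
\end{equation*}
Since $x \in E$ is interior to the open bounded set $E$, we have $0 < t_m < \infty$. A limit argument using the defining supremum provides $x_m \in X \setminus E$ with $d(x_m, \rho_m(s_m)) \leq s_m/4$ for some $s_m \leq t_m$.

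\textbf{Main estimate.} Fix $(y, t) \in \Gamma(x) \setminus T(E^*)$ and let $\rho$ be a unit-speed minimising geodesic from $x$ to $y$ with initial vector $v$; set $s_0 := d(x, y) < t$. Choose $m$ with $\angle(v, v_m) \leq \theta_0$; Corollary \ref{topcor} gives $d(\rho(s), \rho_m(s)) \leq s/4$ for $s \leq s_0$, so $y \in B(\rho_m(s_0), s_0/4) \subset R(v_m, t)$. By condition (C), $t < m(y) \leq c_1 m(x)$, so Lemma \ref{lda} applies with $\beta = c_1$: were $R(v_m, t) \subset E$, then $B(y, 2t) \subset E^*$, contradicting $(y, t) \notin T(E^*)$. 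Hence $t > t_m$; in the subcase $s_0 \leq t_m$, applying Lemma \ref{lda} instead to each sector $R(v_m, s) \subset E$ with $s_0 \leq s < t_m$ (all containing $y$) yields $B(y, r) \subset E^*$ for every $r < 2 t_m$, strengthening the bound to $t > 2 t_m$. The triangle inequality, the defining property of $x_m$, and a second use of Corollary \ref{topcor} then give
\begin{equation*}
d(x_m, y) \leq s_m/4 + d(\rho_m(s_m), \rho(s_m)) + d(\rho(s_m), y) \leq s_m/2 + |s_m - s_0|,
\end{equation*}
where $d(\rho(s_m), y) \leq |s_m - s_0|$ because $\rho$ is unit-speed and passes through $y$ at time $s_0$. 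The subcase $s_0 \leq t_m$ gives $s_m, s_0 \in [0, t_m]$ and hence $d(x_m, y) \leq 3 t_m / 2 < 3 t / 4 < t$, while the subcase $s_0 > t_m$ forces $s_m \leq t_m < s_0$ and hence $d(x_m, y) \leq s_0 - s_m / 2 < t$. Either way $(y, t) \in \Gamma(x_m)$, completing the cover.

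\textbf{Main obstacle.} The most delicate point is an implicit minimality hypothesis in Corollary \ref{topcor}: both geodesic segments from $x$ to $\rho_m(s_m)$ and to $\rho(s_m)$ must be minimising. For $\rho$ this is automatic whenever $s_m \leq s_0$, but for $\rho_m$ on $[0, s_m]$, and for $\rho$ when $s_m > s_0$, the segment may have passed through the cut locus. Handling this properly likely requires refining the construction so that $s_m$ stays within the injectivity radius along $v_m$, or working inside a convex neighbourhood of $x$, exploiting the fact that $t_m$ is bounded by the diameter of $E$.
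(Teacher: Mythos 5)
Your plan coincides with the paper's: cover the unit sphere in $T_xX$ by finitely many angular balls of radius $\theta_0 = \tan^{-1}(1/4)$, follow each $\rho_m$ until its sector first exits $E$, and combine Lemma \ref{lda} with Corollary \ref{topcor}. Your observations that $y \in R(v_m,s_0)$, that $R(v_m,t) \not\subset E$ (hence $t \geq t_m$), and that $t \geq 2t_m$ whenever $s_0 \leq t_m$ are exactly the content of the paper's two cases, just reorganised.

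There is, however, a genuine gap in your subcase $s_0 \leq t_m$. There you bound $d(\rho_m(s_m),\rho(s_m))$ by a second application of Corollary \ref{topcor} at parameter $s_m$, which in that subcase equals $t_m \geq s_0$. When $s_m > s_0$, \emph{neither} arm of the hinge is known to be minimising: $\rho$ is only guaranteed minimising on $[0,s_0]$ and its geodesic extension past $y$ may have left the cut locus, while $\rho_m$ may not be minimising on $[0,t_m]$ either. Even the one-arm (hinge) version of Toponogov's theorem therefore does not apply, and your proposed repair (restricting $s_m$ to the injectivity radius or to a convex neighbourhood) would implicitly constrain the size of $E$, which is not permitted. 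The paper avoids this application of Toponogov altogether: once $y \in R(v_m,t_m)$ is known — so $y \in B(\rho_m(s),s/4)$ for some $s \leq t_m$ — it estimates $d(y,x_m)$ by travelling along $\rho_m$ only,
\begin{equation*}
d(y,x_m) \leq d(y,\rho_m(s)) + d(\rho_m(s),\rho_m(t_m)) + d(\rho_m(t_m),x_m) \leq \tfrac{s}{4} + (t_m-s) + \tfrac{t_m}{4} \leq 2t_m < t,
\end{equation*}
using the bound $t \geq 2t_m$ that you have already derived. No comparison geometry is needed in this step. Replacing your estimate by this one closes the gap.

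Your remaining concern — that $\rho_m$ may fail to be minimising on $[0,s_0]$ (or on $[0,t_m]$ when $t_m < s_0$), so that the initial application of Corollary \ref{topcor} placing $y$ in $R(v_m,s_0)$ (or placing $\rho(t_m)$ near $\rho_m(t_m)$) is not strictly licensed — is well founded and in fact also affects the paper, which does not address it. In that application, the other arm $\rho$ \emph{is} minimising on the relevant interval, so the hinge version of Toponogov's theorem (one minimising side suffices) gives $d(\rho(s),\rho_m(s)) \leq 2s\sin(\theta_0/2) \leq s/4$, which is all that is required; a careful reader should supply this remark.
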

\begin{proof}
  Let $x\in E$ and pick unit vectors $v_1,\ldots , v_N \in T_xX$ so that every $v\in T_xX$ has $\angle (v,v_m) \leq \tan^{-1} (1/4)$ for some $m=1,\ldots , N$.
  For each $m$, denote by $\gr_m$ the unit speed geodesic determined by $v_m$,
  and let $t_m > 0$ be the minimal number ($E$ is bounded) for which 
  $\overline{B}(\gr_m (t_m),t_m/4)$ intersects $X\sm E$, so that we may choose an
  $x_m\in (X\sm E) \cap \overline{B}(\gr_m (t_m),t_m/4)$. Note that now
  $R(v_m,t_m)\subset E$ for each $m$.   
  
  Letting $(y,t)\in\Gamma (x) \sm T(E^*)$, we need to show that $d(y,x_m) < t$ for some
  $m$. By completeness of $X$, we may choose a unit speed minimising 
  geodesic $\gr$ from $x$ to $y$ and then
  fix an $m$ so that $\angle (\gr'(0), v_m) \leq \tan^{-1}(1/4)$. Corollary \ref{topcor}
  guarantees that $y \in R(v_m,d(x,y))$.
  
  Suppose first that $x$ is close to $E^c$ in the direction of $v_m$, in the sense that
  $t_m \leq \beta m(x)$. If $d(x,y) > t_m$, then by Corollary \ref{topcor}
  $\gr (t_m)$ is in $\overline{B}(\gr_m (t_m) , t_m/4)$, and so
  \begin{align*}
    d(y,x_m) &\leq d(y,\gr(t_m)) + d(\gr(t_m),x_m) \\ 
    &\leq d(y,\gr(t_m)) + \frac{t_m}{2} \\
    &\leq d(y,\gr(t_m)) + d(\gr(t_m),x) \\
    &= d(y,x) < t .
  \end{align*} 
  On the other hand,
  if $d(x,y)\leq t_m$, then $y\in R(v_m,t_m)$---that is, 
  $y\in B(\gr_m(s) , s/4)$ for some $0\leq s \leq t_m$---and so 
  \begin{align*}
    d(y,x_m) &\leq d(y,\gr_m(s)) + d(\gr_m(s), \gr_m(t_m)) + d(\gr_m(t_m) , x_m) \\
    &\leq \frac{s}{4} + t_m - s + \frac{t_m}{4} \leq 2t_m .
  \end{align*}    
  According to Lemma \ref{lda}, $B(y,2t_m)\subset E^*$, but since 
  $(y,t)\not\in T(E^*)$ implies that $B(y,t)\not\subset E^*$, we must have $2t_m < t$.
  
  Second, we show that it is not possible to have $t_m > \gb m(x)$ with $\gb = c_1$.
  Note first that since $d(x,y) < t < m(y)$, we have by (A1) that $t < m(y) \leq c_1 m(x)$.
  If indeed we had $t_m > c_1 m(x)$, then
  $y\in R(v_m,c_1 m(x)) \subset R(v_m,t_m) \subset E$. Invoking Lemma \ref{lda} gives
  $B(y,c_1 m(x))\subset B(y,2c_1 m(x))\subset E^*$, 
  while $B(y,t)\not\subset E^*$ and so $c_1 m(x) < t$, which is a contradiction.
\end{proof}

The cone covering lemma allows stronger pointwise estimation of the functional $\mc{A}_q$ when $q \geq 1$ (cf.
Lemma \ref{pointwise2}):

\begin{cor}
  \label{pointwise}
  Assume that $X$ is non-negatively curved. 
  Suppose $1 \leq q < \infty$, and let $f$ be a function on $D$ with bounded support. Let $\gl > 0$ and write
  $E = \{ x\in X : \mc{A}_q f(x) > \gl \}$. Then
  \begin{equation*}
    \mc{A}_q (f\mb{1}_{D\sm T(E^*)})(x) \lesssim_{\dim X} \gl \quad \text{for all $x\in X$} .
  \end{equation*}
  \begin{proof}
    If $x\in X\sm E$, then
    \begin{equation*}
      \mc{A}_q (f\mb{1}_{D\sm T(E^*)})(x) \leq \mc{A}_q f(x) \leq \gl
    \end{equation*}
    by the definition of $E$.
    So let $x\in E$.
    Since $E$ is a bounded open set, we may use Lemma \ref{cclem} to pick
    $x_1,\ldots , x_N\in X\sm E$ (with $N$ depending only on the dimension of $X$) such that
    \begin{equation*}
      \Gamma (x) \setminus T(E^*) \subset \bigcup_{m=1}^N \Gamma (x_m) .
    \end{equation*}
    We can then estimate
    \begin{align*}
      \mc{A}_q (f\mb{1}_{D\sm T(E^*)})(x)
      &= \left( \iint_{\gG (x) \sm T(E^*)} |f(y,t)|^q \frac{d\gg(y)}{\gg(B(y,t))}\frac{dt}{t} \right)^{1/q}\\
      &\leq \sum_{m=1}^N \left( \iint_{\gG (x_m)} |f(y,t)|^q \frac{d\gg (y)}{\gg(B(y,t))} \frac{dt}{t} \right)^{1/q} 
      \leq N\gl,
    \end{align*}
    proving the corollary.
  \end{proof}
\end{cor}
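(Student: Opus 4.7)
The plan is to split into the two natural cases according to whether $x$ lies in $E$ or in $X\sm E$. The case $x\not\in E$ is immediate: since $|f\mb{1}_{D\sm T(E^*)}| \leq |f|$ pointwise and $\mc{A}_q$ is monotone in the absolute value of its argument, $\mc{A}_q(f\mb{1}_{D\sm T(E^*)})(x) \leq \mc{A}_q f(x) \leq \gl$ by the very definition of $E$.

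For the main case $x\in E$, I would first check that $E$ is a bounded open subset of $X$, so that the cone covering lemma applies. Boundedness follows from the hypothesis that $f$ has bounded support: if $\supp f \subset B(x_0,R)\times (0,T)$ and $(y,t)\in\gG(x)\cap\supp f$, then $d(x,x_0) \leq d(x,y) + d(y,x_0) < T+R$, so $E$ is contained in $B(x_0, T+R)$. Openness is routine from lower semicontinuity of $x\mapsto \mc{A}_q f(x)$. Having verified this, I can invoke Lemma \ref{cclem}, using the pair $(\ga,\gl)$ fixed just before its statement, to produce finitely many points $x_1,\ldots,x_N \in X\sm E$, with $N$ depending only on $\dim X$, such that
\begin{equation*}
  \gG(x)\sm T(E^*) \subset \bigcup_{m=1}^N \gG(x_m).
\end{equation*}

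With the covering in hand, the estimate drops out:
\begin{equation*}
  \mc{A}_q(f\mb{1}_{D\sm T(E^*)})(x)^q = \iint_{\gG(x)\sm T(E^*)} |f(y,t)|^q \,\frac{d\gg(y)}{\gg(B(y,t))}\frac{dt}{t} \leq \sum_{m=1}^N \mc{A}_q f(x_m)^q \leq N\gl^q,
\end{equation*}
since each $x_m\not\in E$ forces $\mc{A}_q f(x_m) \leq \gl$. Taking $q$-th roots gives $\mc{A}_q(f\mb{1}_{D\sm T(E^*)})(x) \leq N^{1/q}\gl$, which is the desired bound with constant depending only on $\dim X$. Equivalently, one could apply Minkowski in $\ell^q$ to the cone decomposition and obtain the slightly weaker but comparable bound $\sum_m \mc{A}_q f(x_m) \leq N\gl$.

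The substantive work is already packaged in Lemma \ref{cclem}: the nontrivial point is controlling $\gG(x)\sm T(E^*)$ by a \emph{dimensionally bounded} collection of cones over $X\sm E$, rather than by a collection whose size depends on the geometry of $E$. Once that is granted, the corollary is essentially formal, and I do not anticipate any obstacle beyond the two verifications above (boundedness and openness of $E$).
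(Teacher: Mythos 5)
Your proposal is correct and follows the same route as the paper: split on whether $x\in E$, use Lemma \ref{cclem} for the nontrivial case, and estimate via the cone decomposition. The only differences are cosmetic: you verify explicitly that $E$ is bounded and open (which the paper asserts without comment), and you sum $q$-th powers to get $N^{1/q}\gl$ rather than summing the $\mc{A}_q$-values to get $N\gl$; both depend only on $\dim X$.
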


\begin{rmk}
  At the time of writing we do not know of any doubling Riemannian manifolds (equipped with $\gfv$ and $m$) for which the cone covering lemma fails.
  It would be interesting to determine more precisely which spaces admit cone coverings of the type above.
\end{rmk}

\end{appendix}

\footnotesize
\bibliographystyle{amsplain}
\bibliography{analysis}  
\end{document}